\newtheorem{thm}{Theorem}[section]
\newtheorem{prop}[thm]{Proposition}
\newtheorem{lem}[thm]{Lemma}
\theoremstyle{definition}
\newtheorem*{rem}{Remark}
\newtheorem{ex}[thm]{Example}
\newcommand{\C}{\mathbb{C}}
\newcommand{\X}{\mathfrak{X}}
\newcommand{\Irr}{\operatorname{Irr}}
\newcommand{\diag}{\operatorname{diag}}
\newcommand{\spt}{\operatorname{spt}}
\newcommand{\Sym}{\operatorname{Sym}}
\newcommand{\Aut}{\operatorname{Aut}}
\newcommand{\CC}{coherent configuration}
\newcommand{\HCC}{homogeneous coherent configuration}
\begin{document}
\title{Weights on homogeneous coherent configurations}
\author{Akihide Hanaki\thanks{
    Supported by JSPS KAKENHI Grant Number JP22K03266.}}
\date{\small Faculty of Science, Shinshu University,
  Matsumoto, 390-8621, Japan\\
  {\tt hanaki@shinshu-u.ac.jp}}
\maketitle

\begin{abstract}
  D.~G.~Higman generalized a coherent configuration and defined  a weight.
  In this article, we will modify the definition and investigate weights on coherent configurations.
  If our weights are on a thin homogeneous coherent configuration,
  that is essentially a finite group, then there is a natural correspondence
  between the set of equivalence classes of weights and $2$-cohomology group
  of the group.
  We also give a construction of weights as a generalization of Higman's method
  using monomial representations of finite groups.
  \medskip

  {\it Keywords} : homogeneous coherent configuration; weight; association scheme;
  factor set; cohomology
\end{abstract}

\section{Introduction}
In \cite{Higman1975}, D.~G.~Higman established basic theory of coherent configurations,
and in \cite{Higman1976}, he generalized a coherent configuration and defined  a weight.
Typical examples of coherent configurations are defined using permutation representations of
finite groups, and weights are defined using monomial representations \cite{HigmanSapporo}.
However, after \cite{Higman1976}, weights have been little studied.
In this article, we will modify the definition and investigate weights on coherent configurations.
Especially, we will consider weights on homogeneous coherent configurations.
Homogeneous coherent configurations are just (not necessarily commutative)
association schemes in \cite{Zi}.
In Higman's definition, a weight is a generalization of a coherent configuration,
but we will consider a weight on a coherent configuration.

In Section \ref{sec2}, we will give definitions.
Also we will define equivalence of weights,
which is not considered in \cite{Higman1976}.
A finite group $G$ can be considered as a homogeneous coherent configuration.
In Section \ref{sec3}, we consider this case and show that
there is a natural correspondence
between the set of equivalence classes of weights and 2nd cohomology group of the group $G$.
In Section \ref{sec4}, we generalize the method by Higman using monomial representations
of finite groups.

\bigskip
Let $X$ be a finite set.
We denote by $M_X(\C)$ the matrix algebra
both rows and columns of whose matrices are  indexed by the set $X$.
For $a_x\in \C$ ($x\in X$), $\diag(a_x\mid x\in X)\in M_X(\C)$ is
the diagonal matrix with $(x,x)$-entry $a_x$.
For $c\subset X\times X$, $A_c\in M_X(\C)$ is defined by $(A_c)_{x,y}=1$ if $(x,y)\in c$ and $0$ otherwise.
For $c\subset X\times X$, $c^*=\{(y,x)\mid (x,y)\in c\}$.
Obviously, $A_{c^*}=A_c^T$, the transposed matrix of $A_c$.

\section{Coherent configurations and weights}\label{sec2}
\subsection{Coherent configurations}
Let $X$ be a finite set.
We consider the following conditions.
\begin{itemize}
  \item[(C1)]
  $C$ is a partition of $X\times X$.
  Namely, $C$ is a collection of non-empty subsets of $X\times X$,
  $X\times X=\bigcup_{c\in C} c$,
  and $c\cap c'=\emptyset$ for $c\ne c'\in C$.
  \item[(C2)]
  If $c\in C$, then $c^*\in C$.
  \item[(C3)]
  There is a subset $\Delta$ of $C$ such that $\bigcup_{d\in \Delta}d=\{(x,x)\mid x\in X\}$.
  \item[(C4)]
  $\C C=\bigoplus_{c\in C}\C A_c$ is an algebra.
\end{itemize}
The pair $\mathfrak{X}=(X, C)$ is called a \emph{configuration} if (C1) holds.
The configuration $\mathfrak{X}$ is said to be \emph{precoherent} if (C1), (C2)  and (C3) hold.
The configuration $\mathfrak{X}$ is said to be \emph{coherent} if (C1), (C2), (C3) and (C4) hold.
The algebra $\C C$ is called the \emph{adjacency algebra} of $\mathfrak{X}$.
When $\Delta$ is a singleton, a coherent configuration $\mathfrak{X}$ is
 said to be \emph{homogeneous}.
 Homogeneous coherent configurations are just (non-commutative) \emph{association schemes}
 in \cite{Zi}.

 \begin{ex}\label{ex2.1}[Thin {\HCC}s]
   Let $G$ be a finite group. For $g\in G$, define $c_g=\{(x,y)\in G\times G\mid xg=y\}$.
   Then $\mathfrak{X}(G)=(G,\{c_g\mid g\in G\})$ is a \HCC.
   In this case, we say that $\mathfrak{X}(G)$ is \emph{thin}.
 \end{ex}

 \begin{ex}\label{ex2.2}[Schurian \HCC, centralizer algebra]
   Let $G$ be a finite transitive permutation group on $X$,
   and let $T$ be a permutation representation of $G$.
   Set $V=\{A\in M_n(\C)\mid \text{$AT(g)=T(g)A$ for all $g\in G$}\}$, the \emph{centralizer algebra}.
   Then $V$ has a basis consisting of $01$-matrices ($G$-orbits on $X\times X$).
   We can define a homogeneous coherent configuration.
   In this case, we say that the {\HCC} is \emph{schurian}.

   For $x\in X$, the stabilizer $H=G_x$ defines the permutation representation.
   Thus we will write the {\HCC} by $\X(G,H)$.
 \end{ex}

 \subsection{Weights}
Let $\X=(X,C)$ be a \CC.
For $W\in M_X(\C)$, we define the \emph{support} of $W$ by
$$\spt(W)=\{(x,y)\in X\times X\mid W_{xy}\ne 0\}.$$
For $c\in C$, we set
$$A_c^W=A_c\circ W,$$
where $\circ$ is the entry-wise product (Hadamard product).
We consider the following conditions.
\begin{itemize}
  \item[(W1)]
  $\mathrm{spt}(W)=\bigcup_{d\in D}d$ for some subset $D$ of $C$,
  and, if $d\subset\mathrm{spt}(W)$, then $d^*\subset\mathrm{spt}(W)$.
  \item[(W2)]
  $W_{xx}\ne 0$ for all $x\in X$.
  \item[(W3)]
  $\C^W C=\bigoplus_{c\in C} \C A_c^W$  is an algebra.
  \item[(W4)]
  $W$ is hermitian,  $||W_{xy}||\in\{0,1\}$ for all $x,y\in X$, and $W_{xx}=1$ for all $x\in X$.
\end{itemize}
We call $W$ a \emph{weight} on $\X$ if (W1), (W2) and (W3) hold. 
We call $W$ an \emph{H-weight} on $\X$ if (W1), (W2), (W3) and (W4) hold
(``H-'' is due to Higman). 

\begin{rem}
  In \cite{Higman1976}, Higman called $W$ a \emph{weight} if $\X$ is precoherent configuration
  and (W1) and (W4) hold, and a \emph{coherent weight} if $W$ is a weight and
  (W3) holds (the condition (W2) is automatically holds by (W4)).
  Thus, in Higman's sense,
  weights are not necessarily {\CC}s, and
  $\C C$ is not necessarily an algebra.
  In our definition, weights are on {\CC}s and we require that $\C C$ is also an algebra.
\end{rem}

\begin{ex}
  The ``all one'' matrix $W$ is a weight on any \CC, and called the \emph{standard weight}.
  In this case, $\C^W C=\C C$.
  The identity matrix $W$ is a weight on any \CC, and called the \emph{trivial weight}.
\end{ex}

\begin{ex}\label{ex2.4}
  Let $G$ be a finite group, $H$ a subgroup of $G$.
  The induced representation from the trivial representation of $H$ to $G$ is the
  transitive permutation representation and defines a {\HCC} $\X(G,H)$ as in Example \ref{ex2.2}.
  Let $\varphi$ be a linear character of $H$, and let $U_\varphi$ be
  the representation of $H$ affording $\varphi$.
  Consider the monomial representation $U_\varphi^{\uparrow G}$ and set
  $V_\varphi=\{A\in M_n(\C)\mid 
  \text{$AU_\varphi^{\uparrow G}(g)=U_\varphi^{\uparrow G}(g)A$ for all $g\in G$}\}$.
  Then $V_\varphi$ determines an H-weight on $\X(G,H)$ \cite{HigmanSapporo}.
\end{ex}

\subsection{Equivalence}
We define an equivalence of weights on \CC.
Let $W$ and $W'$ be weights on a {\CC} $\X=(X,C)$.
Set $\Aut(\mathfrak{X})=\{\sigma\in \Sym(X) \mid \text{$c^\sigma=c$ for all $c\in C$}\}$,
where $c^\sigma=\{(x^\sigma,y^\sigma)\mid (x,y)\in c\}$.
Let $P_\sigma\in M_X(\C)$ be the permutation matrix corresponding to $\sigma\in \Sym(X)$.
We say that $W$ and $W'$ are \emph{equivalent} and write $W\sim W'$ if
there exist
$a_x\in \C^{\times}$ ($x\in X$),
$\gamma(c) \in \C^{\times}$ ($c\in C$)
and $\sigma\in\Aut(\mathfrak{X})$ such that
$$W'=\diag(a_x\mid x\in X)^{-1}P_\sigma^{-1} \sum_{c\in C} \gamma(c) A_c^W
P_\sigma\diag(a_x\mid x\in X).$$
We remark that $W=\sum_{c\in C} A_c^W$.
We say that two H-weights 
$W$ and $W'$ are \emph{H-equivalent} and write $W\sim_H W'$ if
$W\sim W'$ for 
$||a_x||=||\gamma(c)||=1$ ($x\in X$, $c\in C$)
and $\gamma(i^*)=\gamma(i)^{-1}$.

\section{Weights on a finite group}\label{sec3}
In this section, we consider weights and H-weights on a thin \HCC,
a finite group (Example \ref{ex2.1}).
We will show that weights are essentially factor sets, in this case.

We recall the basic theory of $2$-cohomology of a finite group with reference to \cite{NT}.
Let $G$ be a finite group.
A function $\alpha:G\times G \to \C^\times$ is called a \emph{factor set} or a \emph{$2$-cocycle}  if
\begin{equation}
\alpha(g,h)\alpha(gh,k)=\alpha(g,hk)\alpha(h,k) \tag{$\ast$}
\end{equation}
hold for all $g,h,k\in G$.
It is possible to change $\C^\times$ to an abelian group $M$ and
consider the action of $G$ on $M$,
but we will consider only the trivial action on $\C^\times$.
If we consider a vector space $\bigoplus_{g\in G}\C v_g$ with the multiplication
$v_g v_h=\alpha(g,h)v_{gh}$, then the condition ($\ast)$ is equivalent
to the associativity of the multiplication.
Thus, for a factor set $\alpha$, we can define the \emph{generalized group algebra}
$\C^{(\alpha)}G=\bigoplus_{g\in G}\C v_g$.
The set $Z^2(G,\C^\times)$ of all factor sets is an abelian group
by $(\alpha\beta)(g,h)=\alpha(g,h)\beta(g,h)$.
Changing the basis $v_g\mapsto \gamma(g)v_g$ yields the change of the factor set
$\alpha(g,h)\mapsto \gamma(g)\gamma(h)\gamma(gh)^{-1}\alpha(g,h)$.
We say that factor sets $\alpha$ and $\beta$ are \emph{equivalent} and write $\alpha\sim \beta$
if there exists a map $\gamma:G\to\C^\times$ such that
$\beta(g,h)= \gamma(g)\gamma(h)\gamma(gh)^{-1}\alpha(g,h)$.
A factor set $\alpha$ is called a \emph{$2$-coboundary} if 
there exists a map $\gamma:G\to\C^\times$ such that
$\alpha(g,h)= \gamma(g)\gamma(h)\gamma(gh)^{-1}$.
The set $B^2(G,\C^\times)$ of all $2$-coboundaries is a subgroup of $Z^2(G,\C^\times)$.
The factor group $H^2(G,\C^\times)=Z^2(G,\C^\times)/B^2(G,\C^\times)$ is called the
\emph{$2$-cohomology group} of $G$.
An element of $H^2(G,\C^\times)$ is an equivalent class of factor sets.

\begin{prop}[{\cite[II. Section 7.2, Theorem 7.3, III. Lemma 5.4]{NT}}]\label{fs_normalized}
  For a factor set $\alpha$, there exists a factor set $\beta$ equivalent to $\alpha$
  satisfying the following conditions.
  \begin{enumerate}[(1)]
    \item $\beta(g,1)=\beta(1,g)=1$, $\beta(g,g^{-1})=\beta(g^{-1},g)=1$ for all $g\in G$.
    \item 
    $||\beta(g,h)||=1$ for all $g,h\in G$.
  \end{enumerate}
\end{prop}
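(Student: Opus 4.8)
The plan is to achieve the two conditions in stages, first normalizing the absolute values and then normalizing the special values, being careful that the second stage does not destroy the first. Since the cited reference (\cite{NT}) proves the existence of a normalized factor set and of one with values on the unit circle separately, the only real content here is to combine the two normalizations compatibly; I would phrase the argument so that this compatibility is manifest.

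First I would handle condition (2). Given $\alpha$, define $\gamma(g)$ by choosing for each $g\in G$ a value with $\gamma(g)\gamma(g)^{-1}$... more precisely, set $\gamma(g) = \| \text{(some product of $\alpha$-values)} \|^{1/2}$ so that replacing $\alpha$ by $\alpha'(g,h)=\gamma(g)\gamma(h)\gamma(gh)^{-1}\alpha(g,h)$ makes $\|\alpha'(g,h)\|=1$. The clean way: any factor set $\alpha$ with values in $\C^\times$ can be written (since $\C^\times \cong \mathbb{R}_{>0}\times S^1$ as groups) as $\alpha = \alpha_+ \cdot \alpha_1$ where $\alpha_+$ takes values in $\mathbb{R}_{>0}$ and $\alpha_1$ takes values in $S^1$, and each factor is itself a factor set because the decomposition is a group homomorphism applied entrywise and ($\ast$) is multiplicative. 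Now $\mathbb{R}_{>0}$ is divisible and torsion-free, hence $H^2(G,\mathbb{R}_{>0})=0$ (a standard fact: $H^2$ of a finite group with coefficients in a uniquely divisible group vanishes, since $|G|$ kills $H^2$ but multiplication by $|G|$ is invertible). So $\alpha_+$ is a $2$-coboundary, $\alpha_+(g,h)=\gamma(g)\gamma(h)\gamma(gh)^{-1}$ for some $\gamma:G\to\mathbb{R}_{>0}$, and then $\beta_0 := \gamma^{-1}\cdot\text{(coboundary)}\cdot\alpha$ (i.e. dividing $\alpha$ by this coboundary) equals $\alpha_1$, which satisfies (2) and is equivalent to $\alpha$.

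Next I would adjust $\beta_0$ (now $S^1$-valued) to also satisfy (1), using the standard normalization trick. Put ($\ast$) with $h=1$: $\beta_0(g,1)\beta_0(g,k)=\beta_0(g,k)\beta_0(1,k)$, so $\beta_0(g,1)=\beta_0(1,k)$ for all $g,k$; call this common value $\lambda = \beta_0(1,1)\in S^1$. Replacing $\beta_0$ by $\gamma_1(g)\gamma_1(h)\gamma_1(gh)^{-1}\beta_0(g,h)$ with the constant function $\gamma_1(g)=\lambda^{-1}$ (note $\lambda^{-1}\in S^1$, so (2) is preserved) gives a factor set $\beta_1$ with $\beta_1(g,1)=\beta_1(1,g)=1$. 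Then ($\ast$) with $k=g^{-1}$, $h=g$ reads $\beta_1(g,g)^{?}$... rather, set $h=g^{-1}, k=g$: $\beta_1(g,g^{-1})\beta_1(1,g)=\beta_1(g,1)\beta_1(g^{-1},g)$, which gives $\beta_1(g,g^{-1})=\beta_1(g^{-1},g)$ once the two already-normalized values are $1$. Finally choose $\gamma_2:G\to S^1$ with $\gamma_2(1)=1$, $\gamma_2(g)=\gamma_2(g^{-1})$, and $\gamma_2(g)^2 = \beta_1(g,g^{-1})^{-1}$ — possible since squaring is surjective on $S^1$ and we can pick $\gamma_2$ on each pair $\{g,g^{-1}\}$ consistently (for $g=g^{-1}$ pick either square root). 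Then $\beta(g,h)=\gamma_2(g)\gamma_2(h)\gamma_2(gh)^{-1}\beta_1(g,h)$ still has $\|\beta(g,h)\|=1$ and $\beta(g,1)=\beta(1,g)=1$ (since $\gamma_2(1)=1$) and $\beta(g,g^{-1})=\gamma_2(g)\gamma_2(g^{-1})\beta_1(g,g^{-1})=\gamma_2(g)^2\beta_1(g,g^{-1})=1$, hence also $\beta(g^{-1},g)=1$. This $\beta$ is equivalent to $\alpha$ and satisfies both (1) and (2).

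The step I expect to need the most care is the very last one: checking that the three successive coboundary adjustments can all be taken with $S^1$-values so that condition (2) survives through the normalization of condition (1), and that the choices of square roots on the pairs $\{g,g^{-1}\}$ are globally consistent. Everything else is either the standard splitting $\C^\times\cong\mathbb{R}_{>0}\times S^1$ or the routine specializations of ($\ast$); since the statement only asserts existence and the references already supply the two pieces, it suffices to present this combination cleanly rather than reprove the cohomological vanishing in detail.
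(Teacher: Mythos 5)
Your argument is correct, but it is not the paper's argument, because the paper offers none: Proposition \ref{fs_normalized} is quoted directly from Nagao--Tsushima (II.7.3 and III, Lemma 5.4), so your proposal amounts to a self-contained reconstruction of the cited facts. The route you take is sound: the polar splitting $\C^\times\cong\mathbb{R}_{>0}\times S^1$ applied entrywise does send factor sets to factor sets, $H^2(G,\mathbb{R}_{>0})=0$ for finite $G$ (annihilated by $|G|$, while $|G|$-th power is an automorphism of the uniquely divisible coefficients), so $\alpha$ is equivalent to its $S^1$-valued part; the specializations of $(\ast)$ at $h=1$ and at $(h,k)=(g^{-1},g)$ are computed correctly; and the delicate points are handled: the constant coboundary $\lambda^{-1}$ and the function $\gamma_2$ both take values in $S^1$, so condition (2) survives, the choice of square roots on each pair $\{g,g^{-1}\}$ is consistent precisely because $\beta_1(g,g^{-1})=\beta_1(g^{-1},g)$, and $\gamma_2(1)=1$ is compatible with $\beta_1(1,1)=1$. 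The half-formulated first attempt at defining $\gamma$ via a square root of absolute values is superseded by the clean splitting and does no harm. For comparison, the textbook proof you are replacing typically shows that every class of a finite group contains a cocycle whose values are $|G|$-th roots of unity (via the transfer/norm argument), which yields (2) together with finite order, and then normalizes at $1$ and at $(g,g^{-1})$ much as you do; your version trades that sharper conclusion for a slightly softer but equally valid vanishing argument, which is all the proposition needs.
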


Now we consider weights on thin {\HCC}s.
The next lemma is clear by definition.

\begin{lem}
  Let $W$ be a weight on a thin {\HCC} $\X(G)$ defined by a finite group $G$.
  Then $\spt(W)=\bigcup_{h\in H}c_h$ for some subgroup $H$ of $G$,
  where $c_h=\{(x,y)\in G\times G\mid xh=y\}$.
\end{lem}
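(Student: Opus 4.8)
The plan is to unwind the definitions and use the algebra condition (W3) together with the group structure of $G$ acting on itself by right translation. Let $W$ be a weight on $\X(G)$ and let $D\subset \{c_g\mid g\in G\}$ be the subset with $\spt(W)=\bigcup_{d\in D}d$, which exists by (W1). Define $H=\{g\in G\mid c_g\subset\spt(W)\}$, so that $\spt(W)=\bigcup_{h\in H}c_h$ by construction; it remains only to check that $H$ is a subgroup of $G$. First I would observe that $1\in H$: condition (W2) says $W_{xx}\ne 0$ for all $x$, and $(x,x)\in c_1$, so $c_1\subset\spt(W)$. Next, closure under inverses follows from the second clause of (W1): if $h\in H$ then $c_h\subset\spt(W)$, hence $c_h^*\subset\spt(W)$; since $c_h^*=c_{h^{-1}}$ in a thin \HCC, we get $h^{-1}\in H$.

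The one remaining point — and the step I expect to require (W3) — is closure under multiplication. Suppose $g,h\in H$. In the adjacency algebra of $\X(G)$ one has $A_{c_g}A_{c_h}=A_{c_{gh}}$, because right translation by $g$ followed by right translation by $h$ is right translation by $gh$. I would like to transfer this to the weighted matrices $A_{c_g}^W=A_{c_g}\circ W$. The point is that on a thin \HCC each $A_{c_g}$ is a permutation matrix (the permutation $x\mapsto xg$), so $A_{c_g}^W$ is a \emph{monomial} matrix: it has exactly one nonzero entry in each row and column, namely $W_{x,xg}$ in row $x$. Multiplying two monomial matrices $A_{c_g}^W$ and $A_{c_h}^W$ therefore produces a monomial matrix whose support is exactly the support of $A_{c_g}A_{c_h}=A_{c_{gh}}$, i.e. $c_{gh}$, and whose entries are the products $W_{x,xg}W_{xg,xgh}$, all nonzero because $g,h\in H$ forces $W_{x,xg}\ne 0$ and $W_{xg,xgh}\ne 0$. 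Thus $A_{c_g}^W A_{c_h}^W$ is a nonzero scalar multiple (entrywise) of $A_{c_{gh}}^W$ on the nose, in particular a nonzero element of $\C A_{c_{gh}}^W$. Since $\C^W C$ is an algebra by (W3), this product lies in $\bigoplus_c \C A_c^W$, which is consistent; but more to the point, the product is nonzero, so $A_{c_{gh}}^W\ne 0$, meaning $c_{gh}\subset\spt(W)$ and hence $gh\in H$.

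Therefore $H$ is a subgroup of $G$ and $\spt(W)=\bigcup_{h\in H}c_h$, as claimed. The main obstacle, such as it is, is purely bookkeeping: verifying that in the thin case $A_{c_g}^W$ is monomial and that $c_g^*=c_{g^{-1}}$, so that the support of a product of two weighted basis matrices is governed by the group multiplication rather than by cancellation. Once that is in hand the argument is immediate, and indeed — as the paper notes — the lemma is ``clear by definition.''
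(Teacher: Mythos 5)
Your proof is correct and is essentially the argument the paper has in mind when it dismisses the lemma as ``clear by definition'': identity from (W2), inverses from the second clause of (W1) together with $c_h^*=c_{h^{-1}}$, and closure under products from the monomial structure of the $A_{c_g}^W$ plus (W3). The only slight overstatement is calling the product ``a nonzero scalar multiple of $A_{c_{gh}}^W$ on the nose'' before (W3) is used --- entrywise that is not automatic --- but your actual logical step (the product is nonzero, supported in $c_{gh}$, and lies in $\bigoplus_c \C A_c^W$ whose summands have disjoint supports, forcing $A_{c_{gh}}^W\ne 0$ and hence $c_{gh}\subset\spt(W)$ by (W1)) is sound.
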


We say that $W$ has ``no zero entry'', if $\spt(W)=G$.
Clearly, if $W\sim W'$ and $W$ has no zero entry, then so does $W'$.
The following theorems are the main results in this section.

\begin{thm}\label{thmA}
  Let $\X(G)$ be a thin {\HCC}  defined by a finite group $G$.
  Then there exist natural bijections between the following sets:
  \begin{enumerate}[(1)]
    \item $\mathfrak{W}$ : the set of $\sim$ equivalence classes of weights
    on $\X(G)$ having no zero entry,
    \item $\mathfrak{W}_H$ : the set of $\sim_H$ equivalence classes of H-weights
    on $\X(G)$ having no zero entry,
    \item $H^2(G,\C^\times)$ : the $2$-cohomology group of $G$.
  \end{enumerate}
\end{thm}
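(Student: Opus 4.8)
The strategy is to realize $H^2(G,\C^\times)$ directly as the set of structure‑constant systems of the algebras $\C^W C$, and then to check that this description refines correctly to H‑weights and $\sim_H$.

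\emph{Attaching a factor set.} Let $W$ be a weight on $\X(G)$ with no zero entry. Writing $A_{c_g}$ for the permutation matrix with $(x,xg)$‑entry $1$, one computes that $A_{c_g}^W A_{c_h}^W$ is supported on $c_{gh}$, with $(x,xgh)$‑entry $W_{x,xg}W_{xg,xgh}\ne 0$. Since the matrices $\{A_{c_f}^W\}_{f\in G}$ have pairwise disjoint supports and $A_{c_{gh}}^W$ has no zero entry on $c_{gh}$, condition (W3) forces
$$A_{c_g}^W A_{c_h}^W=\alpha_W(g,h)\,A_{c_{gh}}^W$$
for a unique $\alpha_W(g,h)\in\C^\times$; equivalently $W_{x,xg}W_{xg,xgh}=\alpha_W(g,h)W_{x,xgh}$ for \emph{all} $x\in G$. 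Associativity of matrix multiplication applied to $A_{c_g}^W A_{c_h}^W A_{c_k}^W$ then yields $(\ast)$, so $\alpha_W\in Z^2(G,\C^\times)$. I will show that $[W]\mapsto[\alpha_W]$ is the desired bijection $\mathfrak W\to H^2(G,\C^\times)$, and that its restriction gives $\mathfrak W_H\to H^2(G,\C^\times)$.

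\emph{The bijection $\mathfrak W\leftrightarrow H^2(G,\C^\times)$.} For well‑definedness: if $\sigma\in\Aut(\X(G))$ then $c_g^\sigma=c_g$ gives $(xg)^\sigma=x^\sigma g$, and a short computation with permutation and diagonal matrices shows that replacing $W$ by $W'=\diag(a_x)^{-1}P_\sigma^{-1}\big(\sum_g\gamma(g)A_{c_g}^W\big)P_\sigma\diag(a_x)$ changes $\alpha_W$ into $\alpha_{W'}(g,h)=\gamma(g)\gamma(h)\gamma(gh)^{-1}\alpha_W(g,h)$ --- the data $(a_x)$ and $\sigma$ cancel --- so $[\alpha_{W'}]=[\alpha_W]$. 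For surjectivity: given $\alpha\in Z^2(G,\C^\times)$ define $W^{(\alpha)}$ by $W^{(\alpha)}_{xy}=\alpha(x,x^{-1}y)$; then $(\ast)$ applied to each triple $(x,g,h)$ shows that $W^{(\alpha)}$ satisfies (W1)--(W3) and that $\alpha_{W^{(\alpha)}}=\alpha$. For injectivity: from the defining relation at $x=1$ one gets $W_{x,y}=\alpha_W(x,x^{-1}y)\,W_{1,y}\,W_{1,x}^{-1}$, so $\sigma=\mathrm{id}$, $\gamma\equiv1$, $a_x=W_{1,x}^{-1}$ exhibit $W\sim W^{(\alpha_W)}$; and if $\alpha_W(g,h)=\gamma(g)\gamma(h)\gamma(gh)^{-1}\alpha_{W'}(g,h)$ then $\sigma=\mathrm{id}$, the same $\gamma$, and $a_x=\gamma(x)^{-1}$ exhibit $W^{(\alpha_{W'})}\sim W^{(\alpha_W)}$. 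Chaining these equivalences gives $W\sim W'$ whenever $[\alpha_W]=[\alpha_{W'}]$.

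\emph{The H‑weight case.} If $W$ is an H‑weight with no zero entry, then $\|W_{xy}\|=1$ together with hermiticity forces $\|\alpha_W(g,h)\|=1$, $\alpha_W(g,1)=\alpha_W(1,g)=1$ (since $W_{xx}=1$), and $\alpha_W(g,g^{-1})=\alpha_W(g^{-1},g)=\|W_{1,g}\|^2=1$; thus $\alpha_W$ is already a normalized factor set in the sense of Proposition \ref{fs_normalized}. Conversely, if $\alpha$ is normalized and unitary, then $(\ast)$ for $(x,s,s^{-1})$ gives $\alpha(x,s)\alpha(xs,s^{-1})=1$, which is exactly hermiticity of $W^{(\alpha)}$, so $W^{(\alpha)}$ is an H‑weight; combined with Proposition \ref{fs_normalized} this yields surjectivity of $\mathfrak W_H\to H^2(G,\C^\times)$. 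For injectivity with respect to $\sim_H$ one reruns the two reductions above, now checking $\|a_x\|=\|\gamma(g)\|=1$ and $\gamma(g^{-1})=\gamma(g)^{-1}$: the first uses $a_x=W_{1,x}^{-1}$ with $\|W_{1,x}\|=1$ and $\gamma\equiv1$, hence is an H‑equivalence; in the second, $\alpha_W$ and $\alpha_{W'}$ are both normalized and unitary, so the connecting $\gamma$ satisfies $\|\gamma(g)\|\,\|\gamma(h)\|=\|\gamma(gh)\|$, whence $g\mapsto\|\gamma(g)\|$ is a homomorphism $G\to(\mathbb{R}_{>0},\cdot)$ and therefore trivial (as $G$ is finite), while $\gamma(1)=\alpha_W(1,1)\alpha_{W'}(1,1)^{-1}=1$ together with $\alpha_W(g,g^{-1})=\alpha_{W'}(g,g^{-1})=1$ forces $\gamma(g^{-1})=\gamma(g)^{-1}$, so that reduction is an H‑equivalence as well. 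This gives $\mathfrak W_H\leftrightarrow H^2(G,\C^\times)$; and since the natural map $\mathfrak W_H\to\mathfrak W$ (well defined because $\sim_H$ implies $\sim$) is compatible with the two bijections to $H^2(G,\C^\times)$, it is itself a bijection, completing the triangle.

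\emph{Main obstacle.} The only delicate point is the H‑weight injectivity: one must know that a cohomology between the normalized unitary factor sets $\alpha_W$ and $\alpha_{W'}$ can be realized by a $\gamma$ meeting the restrictive conditions in the definition of $\sim_H$. What makes this automatic is, on the one hand, the finiteness of $G$ (killing the $\mathbb{R}_{>0}$‑part of $\gamma$) and, on the other, the normalization supplied by Proposition \ref{fs_normalized} (pinning $\gamma(1)=1$ and hence $\gamma(g^{-1})=\gamma(g)^{-1}$); beyond this, the work is the routine bookkeeping of how the three kinds of data $(a_x)$, $\gamma$, $\sigma$ act on $\alpha_W$ and compose.
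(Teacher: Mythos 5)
Your proposal is correct, and its skeleton is the paper's own: you attach the factor set $\alpha_W$ to a zero-free weight (the computation of Lemma \ref{lem1}), invert the correspondence via $(W_\alpha)_{xy}=\alpha(x,x^{-1}y)$ (Lemma \ref{lem0}), prove $W\sim W_{\alpha_W}$ by the diagonal rescaling $a_x=W_{1,x}^{-1}$ (Lemma \ref{lem2}), and reach H-weights through the normalized factor sets of Proposition \ref{fs_normalized}; the only cosmetic difference is that you run the bijections in the direction $\mathfrak{W}\to H^2(G,\C^\times)$ and $\mathfrak{W}_H\to H^2(G,\C^\times)$ and deduce $\mathfrak{W}_H\to\mathfrak{W}$ by commutativity, while the paper builds $\Phi:H^2(G,\C^\times)\to\mathfrak{W}$ and $\Psi:\mathfrak{W}_H\to\mathfrak{W}$ separately.

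The one step where you genuinely diverge is the H-side injectivity. The paper (Lemma \ref{lemPsi3}) takes an arbitrary $\sim$-equivalence between two H-weights and shows $||\gamma(g)||=1$ by a maximum/minimum argument and then $||a_g||=1$, but it does not address the remaining requirement $\gamma(i^*)=\gamma(i)^{-1}$ in the definition of $\sim_H$. You instead pass to the normalized representatives $W_{\alpha_W}$ and $W_{\alpha_{W'}}$ and exhibit an explicit chain of H-equivalences, obtaining $||\gamma(g)||=1$ from the homomorphism $g\mapsto||\gamma(g)||$ into $\mathbb{R}_{>0}$ (cleaner than the max/min argument) and, crucially, also pinning $\gamma(1)=1$ and $\gamma(g^{-1})=\gamma(g)^{-1}$ from the normalization $\alpha(g,g^{-1})=1$. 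So your detour buys a more complete verification of the definition of $\sim_H$ than the paper records. Two small points you use silently and should state: the chaining argument needs $\sim_H$ to be transitive (composing the diagonal, permutation and $\gamma$-data of two H-equivalences again satisfies the unit-modulus and $\gamma(c^*)=\gamma(c)^{-1}$ conditions, which is routine but not free), and in the very first step one should note that the product $A_{c_g}^W A_{c_h}^W$ is supported inside $c_{gh}$ before invoking (W3) to conclude it is a scalar multiple of $A_{c_{gh}}^W$. Neither affects correctness.
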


\begin{thm}\label{thmB}
  Let $\X(G)$ be a thin {\HCC} defined by a finite group $G$.
  Then there exist natural bijections between the following sets:
  \begin{enumerate}[(1)]
    \item the set of $\sim$ equivalence classes of weights on $\X(G)$,
    \item the set of $\sim_H$ equivalence classes of H-weights on $\X(G)$,
    \item $\bigcup_{H} H^2(H,\C^\times)$, where $H$ runs over all subgroups of $G$.
  \end{enumerate}
\end{thm}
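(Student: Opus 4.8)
The plan is to extend the argument of Theorem \ref{thmA} while keeping track of the support subgroup. By the Lemma, a weight $W$ on $\X(G)$ satisfies $\spt(W)=\bigcup_{h\in H}c_h$ for a uniquely determined subgroup $H=H_W\le G$, so $W$ is block-diagonal with blocks indexed by the cosets $xH$ and is encoded by the function $w\colon G\times H\to\C^\times$, $w(x,h)=W_{x,xh}$ (nonzero by (W1)). A short computation shows that for $g,h\in H$ the product $A_{c_g}^W A_{c_h}^W$ is supported on $c_{gh}$ with $(x,xgh)$-entry $w(x,g)w(xg,h)$, and vanishes if $g\notin H$ or $h\notin H$; hence (W3) is equivalent to the existence of $\alpha=\alpha_W\colon H\times H\to\C^\times$ with
$$w(x,g)\,w(xg,h)=\alpha_W(g,h)\,w(x,gh)\qquad(x\in G,\ g,h\in H),$$
and then associativity inside $M_X(\C)$ forces $\alpha_W\in Z^2(H,\C^\times)$. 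The first step is to verify that $\Phi\colon[W]\mapsto(H_W,[\alpha_W])\in\bigcup_H H^2(H,\C^\times)$ (a disjoint union over subgroups) is well defined on $\sim$-classes: writing an equivalence as a rescaling $A_c^W\mapsto\gamma(c)A_c^W$ followed by conjugation by $P_\sigma$ ($\sigma\in\Aut(\X(G))$, so $c_g^\sigma=c_g$) and by a diagonal matrix, one checks that the first replaces $\alpha_W$ by $\alpha_W\cdot\partial\gamma'$ with $\gamma'(g)=\gamma(c_g)$, while the other two leave $\alpha_W$ unchanged, and none of them alters the set of nonzero entries, so $H_W$ is unchanged.

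For surjectivity of $\Phi$, fix a set $T$ of representatives for the cosets $tH$ (with $1\in T$) and, for $\alpha\in Z^2(H,\C^\times)$, let $W_{T,\alpha}$ be the block-diagonal matrix with $(th,th')$-entry $\alpha(h,h^{-1}h')$; the cocycle identity shows directly that $W_{T,\alpha}$ is a weight on $\X(G)$ with $H_{W_{T,\alpha}}=H$ and $\alpha_{W_{T,\alpha}}=\alpha$. For injectivity the key point is a ``torsor'' observation: on the block $tH$ the function $b_t(h,g)=w(th,g)$ solves $b_t(h,g)b_t(hg,g')=\alpha_W(g,g')b_t(h,gg')$, and so does $b_t^{\mathrm{std}}(h,g)=\alpha_W(h,g)$; any two solutions of such an equation differ by a coboundary in the first variable, i.e.\ $b_t/b_t^{\mathrm{std}}$ has the form $\nu_t(h)/\nu_t(hg)$ for some $\nu_t\colon H\to\C^\times$ (immediate after specializing $h=1$). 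Conjugating $W$ by $\diag(a_x)$ with $a_{th}:=\nu_t(h)$ (and $\gamma\equiv1$, $\sigma=\mathrm{id}$) carries $W$ to $W_{T,\alpha_W}$; and if $\alpha_{W'}=\alpha_W\cdot\partial\gamma'$, a rescaling with $\gamma(c_g)=\gamma'(g)$ for $g\in H$ followed by the diagonal conjugation $a_{th}=\gamma'(h)^{-1}$ carries $W_{T,\alpha_W}$ to $W_{T,\alpha_{W'}}$. Thus $[\alpha_W]=[\alpha_{W'}]$ yields $W\sim W_{T,\alpha_W}\sim W_{T,\alpha_{W'}}\sim W'$, so $\Phi$ is a bijection, giving (1)$\leftrightarrow$(3).

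For (2)$\leftrightarrow$(3) one repeats the same steps with norm-one bookkeeping. If $W$ is an H-weight then $\alpha_W$ is automatically normalized in the sense of Proposition \ref{fs_normalized}: $W_{xx}=1$ gives $\alpha_W(g,1)=\alpha_W(1,g)=1$, and $W_{xg,x}=\overline{W_{x,xg}}=w(x,g)^{-1}$ gives $\alpha_W(g,g^{-1})=1$, while $||\alpha_W(g,h)||=1$. Surjectivity uses Proposition \ref{fs_normalized} to represent a given class by a normalized $\beta$ with $||\beta||=1$; the formula for $W_{T,\beta}$ then gives a matrix which is hermitian (since $\beta(hg,g^{-1})=\beta(h,g)^{-1}=\overline{\beta(h,g)}$), has all entries of norm in $\{0,1\}$, and has $W_{xx}=1$, hence is an H-weight. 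For injectivity one checks that the above reductions stay inside $\sim_H$: the scalars $\nu_t(h)=w(th,h)^{-1}$ have norm one, and a relation $\alpha_{W'}/\alpha_W=\partial\gamma'$ between normalized norm-one cocycles can be solved with $||\gamma'||\equiv1$ (otherwise $||\gamma'||$ would be a nontrivial homomorphism $H\to\mathbb{R}_{>0}$, impossible for $H$ finite) and with $\gamma'(g^{-1})=\gamma'(g)^{-1}$ (from $\partial\gamma'(g,g^{-1})=1$), so that $\gamma(c_g):=\gamma'(g)$ satisfies $\gamma(c^*)=\gamma(c)^{-1}$. Finally, the inclusion of H-weights into weights induces a map $\{\text{H-weights}\}/{\sim_H}\to\{\text{weights}\}/{\sim}$ which, post-composed with the bijection (1)$\leftrightarrow$(3), equals the bijection (2)$\leftrightarrow$(3); hence it is a bijection, giving (1)$\leftrightarrow$(2). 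I expect the genuinely delicate points to be the block-torsor observation and the norm-one refinements in the H-weight case; the remaining parts are direct verifications parallel to those in Theorem \ref{thmA}.
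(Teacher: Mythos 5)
Your argument is correct and follows essentially the same route as the paper's own (very terse) proof: the support of $W$ determines a subgroup $H$, the weight is block-diagonal with all blocks governed by a single factor set on $H$, equivalence preserves the support, and the classification then reduces to the Theorem \ref{thmA} machinery applied per subgroup, with your ``block-torsor'' normalization and the norm-one bookkeeping supplying exactly the details the paper leaves implicit. One harmless slip: in the H-weight injectivity step the normalizing scalars should be $\nu_t(h)=w(t,h)^{-1}$ (more generally $\alpha_W(1,h)\,w(t,h)^{-1}$), not $w(th,h)^{-1}$, but only their having norm one is used, so nothing breaks.
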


To prove the above theorems, we need some lemmas.
For a moment, we suppose that $W$ is a weight of $\X(G)$ having no zero entry.
Set $c_g=\{(x,y)\in G\times G\mid xg=y\}$ and $A_g=A_{c_g}$, the adjacency matrix.
By
$$A_g^W A_h^W=\alpha_W(g,h)A_{gh}^W$$
for $g,h\in G$, we obtain a factor set $\alpha_W$. 
For a factor set $\alpha$, we can define $W_\alpha\in M_G(\C)$ by
$$(W_\alpha)_{xy}=\alpha(x,x^{-1}y).$$

\begin{lem}\label{lem0}
  For a factor set $\alpha$, $W_\alpha$ is a weight and $\alpha_{W_\alpha}=\alpha$.
\end{lem}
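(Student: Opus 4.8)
The plan is to check the three defining conditions (W1), (W2), (W3) of a weight directly from the cocycle identity $(\ast)$, extracting the equality $\alpha_{W_\alpha}=\alpha$ along the way as a by-product of the verification of (W3). Conditions (W1) and (W2) are immediate: since $\alpha$ takes values in $\C^\times$, no entry of $W_\alpha$ vanishes, so $\spt(W_\alpha)=G\times G=\bigcup_{g\in G}c_g$ (hence (W1) holds with $D=C$, the closure under $*$ being vacuous), and $(W_\alpha)_{xx}=\alpha(x,1)\neq 0$ gives (W2).

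For (W3) I would first write down $A_g^W=A_g\circ W_\alpha$ explicitly. Because $(A_g)_{xy}=1$ precisely when $y=xg$, and then $x^{-1}y=g$, one gets $(A_g^W)_{xy}=\alpha(x,g)$ when $y=xg$ and $0$ otherwise. Multiplying, the only surviving term of $(A_g^W A_h^W)_{xy}=\sum_{z}(A_g^W)_{xz}(A_h^W)_{zy}$ is the one with $z=xg$ and $y=zh=xgh$, so $A_g^W A_h^W$ is supported on $c_{gh}$ with $(A_g^W A_h^W)_{x,xgh}=\alpha(x,g)\alpha(xg,h)$. Now apply $(\ast)$ with $(g,h,k)$ replaced by $(x,g,h)$: this gives $\alpha(x,g)\alpha(xg,h)=\alpha(x,gh)\alpha(g,h)=\alpha(g,h)\,(A_{gh}^W)_{x,xgh}$, hence
$$A_g^W A_h^W=\alpha(g,h)\,A_{gh}^W .$$
Since the $A_g^W$ are nonzero with pairwise disjoint supports they are linearly independent, and the displayed relation shows $\bigoplus_{g}\C A_g^W$ is closed under matrix multiplication; indeed $A_g^W\mapsto v_g$ is an algebra isomorphism onto the generalized group algebra $\C^{(\alpha)}G$, which is unital, so (W3) holds. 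The same displayed relation is exactly the statement $\alpha_{W_\alpha}(g,h)=\alpha(g,h)$.

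The computation is short, and the only place where care is needed is the bookkeeping of indices in the product $A_g^W A_h^W$ together with the matching substitution in $(\ast)$; in particular one should check that the right-hand factor $\alpha(x,gh)$ produced by the cocycle identity is precisely the $(x,xgh)$-entry of $A_{gh}^W$, which it is. Consequently the argument goes through for an arbitrary factor set, with no need to replace $\alpha$ by a normalized representative in the sense of Proposition \ref{fs_normalized}.
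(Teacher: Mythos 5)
Your proposal is correct and follows essentially the same route as the paper: the paper's proof is exactly the entry-wise computation $A_g^{W_\alpha}A_h^{W_\alpha}=\alpha(g,h)A_{gh}^{W_\alpha}$ via the cocycle identity, from which both (W3) and $\alpha_{W_\alpha}=\alpha$ follow. Your additional explicit checks of (W1) and (W2) (support is all of $G\times G$ since $\alpha$ takes values in $\C^\times$) are correct and merely make explicit what the paper leaves implicit.
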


\begin{proof}
  By definition, 
  \begin{eqnarray*}
    (A_g^{W_\alpha} A_h^{W_\alpha})_{xy}
    &=& \sum_{z\in G}\delta_{xg,z}\alpha(x,x^{-1}z)\delta_{zh,y}\alpha(z,z^{-1}y)
        =\delta_{xgh,y}\alpha(x,g)\alpha(xg,h)\\
    &=& \delta_{xgh,y}\alpha(x,gh)\alpha(g,h)\\
    (A_{gh}^{W_\alpha})_{xy} &=& \delta_{xgh,y}\alpha(x,x^{-1}y)=\delta_{xgh,y}\alpha(x,gh).
  \end{eqnarray*}
  Thus $A_g^{W_\alpha} A_h^{W_\alpha}=\alpha(g,h)A_{gh}^{W_\alpha}$ and the result holds.
\end{proof}

\begin{lem}\label{lem1}
  Let $W$ be a weight on a thin {\HCC} $\X(G)$ defined by a finite group $G$.
  Then
  $$\alpha_W(g,h)=\frac{W_{x,xg}W_{xg,xgh}}{W_{x,xgh}}$$
  for all $g,h,x\in G$.
  In particular, if $W$ is an H-weight, then $||\alpha_W(g,h)||=1$ for all $g,h\in G$.
\end{lem}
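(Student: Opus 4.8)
The plan is to read off the scalar $\alpha_W(g,h)$ from a single, well-chosen entry of the defining matrix identity $A_g^W A_h^W = \alpha_W(g,h)\, A_{gh}^W$. Since we are in the case where $W$ has no zero entry, every $W_{xy}$ is a nonzero complex number, so the division appearing in the claimed formula is legitimate.

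First I would record the explicit shape of the modified adjacency matrices: for $u,v \in G$ one has $(A_g^W)_{uv} = \delta_{ug,v}\, W_{uv}$, because $(u,v)\in c_g$ precisely when $ug=v$, and $A_g^W = A_{c_g}\circ W$. Multiplying, for arbitrary $x,y\in G$,
$$(A_g^W A_h^W)_{xy} = \sum_{z\in G}\delta_{xg,z}\,W_{xz}\,\delta_{zh,y}\,W_{zy} = \delta_{xgh,y}\,W_{x,xg}\,W_{xg,xgh},$$
while $(A_{gh}^W)_{xy} = \delta_{xgh,y}\,W_{x,xgh}$.

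Next, fixing $x\in G$ and specializing to $y=xgh$, every Kronecker delta becomes $1$, and the defining identity yields $W_{x,xg}\,W_{xg,xgh} = \alpha_W(g,h)\,W_{x,xgh}$. Dividing by $W_{x,xgh}\neq 0$ gives exactly the asserted formula; note that the right-hand side must then be independent of $x$, which is consistent with $\alpha_W(g,h)$ being a single scalar (indeed that independence is forced, since $\alpha_W(g,h)$ was defined so that the matrix identity holds for all rows at once). Finally, for the particular statement: if $W$ is an H-weight with no zero entry, then (W4) forces $||W_{xy}||=1$ for all $x,y$, the value $0$ being excluded by the no-zero-entry hypothesis; applying the multiplicative norm to the formula just obtained gives $||\alpha_W(g,h)|| = ||W_{x,xg}||\cdot||W_{xg,xgh}||\cdot||W_{x,xgh}||^{-1} = 1$. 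There is essentially no obstacle here: the entire content is in writing down the correct $(x,xgh)$-entry of a product of two Hadamard-modified adjacency matrices, exactly as in the computation preceding this lemma.
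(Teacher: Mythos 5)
Your proposal is correct and follows essentially the same route as the paper: compute the $(x,y)$-entry of $A_g^W A_h^W$ and of $\alpha_W(g,h)A_{gh}^W$, specialize to $y=xgh$, and divide by the nonzero entry $W_{x,xgh}$, with the H-weight claim following by taking absolute values. Your explicit remarks about the no-zero-entry hypothesis justifying the division and about the independence of $x$ are fine elaborations of what the paper leaves implicit.
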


\begin{proof}
  The result follows from
    \begin{eqnarray*}
    (A_g^W A_h^W)_{xy} &=& \sum_{z\in G}\delta_{xg,z}W_{x,z}\delta_{zh,y}W_{z,y}\\
                        &=& \delta_{xgh,y}W_{x,xg}W_{xg,xgh},\\
    (A_g^W A_h^W)_{xy} &=& \alpha_W(g,h)(A_{gh}^W)_{x,y}
                            = \alpha_W(g,h)\delta_{xgh,y}W_{x,xgh}
  \end{eqnarray*}
  for $g,h,x,y\in G$.
\end{proof}

For a factor set $\alpha$, a weight $W$, and an H-weight $W$,
we denote the $\sim$ equivalence class containing $\alpha$ by $[\alpha]$,
the $\sim$ equivalence class containing $W$ by $[W]$,
and the $\sim_H$ equivalence class containing $W$ by $[W]_H$, respectively.
We define
$$\Phi:H^2(G,\C^\times)\to \mathfrak{W}, \quad \Phi([\alpha])=[W_\alpha]$$
and show that $\Phi$ is a bijection.

\begin{lem}\label{lem2}
  For a weight $W$, $W\sim W_{\alpha_W}$.
\end{lem}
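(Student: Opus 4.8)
The plan is to produce an explicit equivalence in which the automorphism $\sigma$ is the identity and every $\gamma(c)$ is $1$, so that the only freedom used is conjugation by a diagonal matrix. With those choices the right-hand side of the defining relation for $\sim$ becomes $\diag(a_x\mid x\in X)^{-1}\,W\,\diag(a_x\mid x\in X)$, because $\sum_{c\in C}\gamma(c)A_c^W=\sum_{c\in C}A_c^W=W$ (the remark following the definition of $\sim$). Hence, reading off the $(x,y)$-entry and using $(W_{\alpha_W})_{xy}=\alpha_W(x,x^{-1}y)$, what remains to prove is that one can choose $a_x\in\C^\times$ with
$$\alpha_W(x,x^{-1}y)=a_x^{-1}a_y W_{xy}\qquad\text{for all }x,y\in G.$$

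To evaluate the left-hand side I would invoke Lemma \ref{lem1}, which gives $\alpha_W(g,h)=W_{z,zg}W_{zg,zgh}/W_{z,zgh}$ for any reference point $z\in G$; specialising to $g=x$, $h=x^{-1}y$, $z=1$ yields $\alpha_W(x,x^{-1}y)=W_{1,x}W_{x,y}/W_{1,y}$. Since $W$ has no zero entry, each $W_{1,x}$ lies in $\C^\times$, so it is legitimate to set $a_x=W_{1,x}^{-1}$. Then $a_x^{-1}a_yW_{xy}=W_{1,x}W_{1,y}^{-1}W_{xy}=\alpha_W(x,x^{-1}y)$, which is exactly the identity required. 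As the identity permutation belongs to $\Aut(\X(G))$ and the scalars $\gamma(c)=1$ and $a_x=W_{1,x}^{-1}$ all lie in $\C^\times$, this exhibits $W\sim W_{\alpha_W}$.

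I do not expect any genuine obstacle: once Lemma \ref{lem1} is in hand the verification is essentially a one-line computation. The only point requiring a little care is the bookkeeping of the conjugation — that is, deciding which of the two entries carries $a_x^{-1}$ and which carries $a_y$, and consequently whether one should take $a_x=W_{1,x}$ or its inverse; choosing the reference point $z=1$ in Lemma \ref{lem1} makes this transparent.
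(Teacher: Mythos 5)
Your proof is correct and follows essentially the same route as the paper: the paper also takes $a_x=W_{1x}^{-1}$, conjugates $W$ by $\diag(a_x\mid x\in G)$, and uses Lemma \ref{lem1} at the base point $1$ to identify the result with $W_{\alpha_W}$. The only difference is cosmetic (you verify the entrywise identity directly for $W$, while the paper applies Lemma \ref{lem1} to the conjugated weight $W'$).
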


\begin{proof}
  Set $a_g=W_{1g}^{-1}$ ($g\in G$), and set
  $$W'=\diag(a_g\mid g\in G)^{-1} W \diag(a_g\mid g\in G).$$
  Then $\alpha_W=\alpha_{W'}$, $W\sim W'$, and 
  $W'_{1g}=W'_{11}$ ($g\in G$).
  By setting $x=1$ in Lemma \ref{lem1}, we have $\alpha_W(g,h)=W'_{g,gh}$
  and thus $W'=W_{\alpha_W}$.
%
\end{proof}

\begin{lem}\label{lemPhi1}
  For factor sets $\alpha$ and $\beta$,
  if $\alpha\sim\beta$, then $W_\alpha\sim W_\beta$.
  (Namely, $\Phi$ is well-defined.)
\end{lem}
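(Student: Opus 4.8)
The plan is to unpack the definition of $\sim$ for factor sets and translate it, via the explicit formula $(W_\alpha)_{xy}=\alpha(x,x^{-1}y)$, into the definition of $\sim$ for weights. Suppose $\alpha\sim\beta$, so there is a map $\gamma:G\to\C^\times$ with $\beta(g,h)=\gamma(g)\gamma(h)\gamma(gh)^{-1}\alpha(g,h)$ for all $g,h\in G$. The key observation is that the factor $\gamma(g)\gamma(h)\gamma(gh)^{-1}$, when rewritten with $g=x$ and $h=x^{-1}y$ so that $gh=y$, becomes $\gamma(x)\gamma(x^{-1}y)\gamma(y)^{-1}$, which is \emph{almost} of the form $a_x^{-1}(\text{something})a_y$ — the only obstruction being the middle term $\gamma(x^{-1}y)$, which depends on the pair $(x,y)$ only through the group element $x^{-1}y$, i.e.\ through the class $c_{x^{-1}y}\in C$.

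Concretely, I would set $a_x=\gamma(x)^{-1}$ for $x\in G$ (taking $\sigma=1$ and $\gamma(c_h)=1$ in the definition of $\sim$ for weights — but wait, that leaves no room for the middle term), so instead I would use the $\gamma(c)$ freedom: put $a_x=\gamma(x)^{-1}$ and $\gamma(c_h)=\gamma(h)$ for $h\in G$ (noting $C=\{c_h\mid h\in G\}$ and that $h\mapsto c_h$ is a bijection), and $\sigma=\mathrm{id}$. Then $\bigl(\diag(a_x)^{-1}\sum_h\gamma(c_h)A_{c_h}^{W_\alpha}\diag(a_x)\bigr)_{xy}$ is nonzero only when $(x,y)\in c_{x^{-1}y}$, in which case it equals $a_x^{-1}\gamma(c_{x^{-1}y})(W_\alpha)_{xy}a_y=\gamma(x)\gamma(x^{-1}y)\gamma(y)^{-1}\alpha(x,x^{-1}y)=\beta(x,x^{-1}y)=(W_\beta)_{xy}$. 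Hence $W_\beta=\diag(a_x)^{-1}\sum_c\gamma(c)A_c^{W_\alpha}\diag(a_x)$, which is exactly the relation $W_\alpha\sim W_\beta$. One should also check $W_\alpha$ is genuinely a weight, but that is Lemma~\ref{lem0}, so it may be cited.

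The main thing to verify carefully — the only place a genuine (if small) argument is needed — is that the assignment $c_h\mapsto\gamma(h)$ is well-defined and gives a legitimate choice of the data $\gamma(c)\in\C^\times$ for $c\in C$; this rests on the fact that in a thin \HCC\ the basis relations are indexed bijectively by $G$, so there is no consistency condition to worry about. Everything else is a direct substitution. I expect no real obstacle here; the lemma is essentially bookkeeping, matching the coboundary-type factor $\gamma(g)\gamma(h)\gamma(gh)^{-1}$ against the three kinds of freedom ($a_x$, $\gamma(c)$, $\sigma$) in the definition of equivalence of weights, and observing that only the first two are needed and that they suffice.
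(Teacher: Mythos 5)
Your proof is correct, and it takes a slightly different route from the paper's. The paper first forms the intermediate matrix $W'=\sum_{g\in G}\gamma(g)A_g^{W_\alpha}$, observes that $W'\sim W_\alpha$ (using only the $\gamma(c)$ freedom) and that $W'$ has factor set $\beta$, and then invokes Lemma \ref{lem2} ($W'\sim W_{\alpha_{W'}}=W_\beta$) together with transitivity of $\sim$ to conclude. You instead exhibit the full equivalence data in one step --- $\sigma=\mathrm{id}$, $\gamma(c_h)=\gamma(h)$, $a_x=\gamma(x)^{-1}$ --- and verify entrywise that $\diag(a_x)^{-1}\sum_h\gamma(c_h)A_{c_h}^{W_\alpha}\diag(a_x)=W_\beta$, which is exactly the defining relation for $W_\alpha\sim W_\beta$. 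What your version buys is self-containedness: it does not rely on Lemma \ref{lem2} nor on the (unstated in the paper, though routine) fact that $\sim$ is transitive; what the paper's version buys is brevity and reuse of a lemma it needs anyway. Two cosmetic remarks: the phrase ``nonzero only when $(x,y)\in c_{x^{-1}y}$'' is slightly off, since \emph{every} pair $(x,y)$ lies in $c_{x^{-1}y}$ --- the correct statement is that only the term $h=x^{-1}y$ of the sum contributes to the $(x,y)$ entry, which is what your computation actually uses; and you use the symbol $\gamma$ both for the coboundary map $G\to\C^\times$ and for the equivalence data indexed by $C$, which your assignment $\gamma(c_h)=\gamma(h)$ resolves but which would be cleaner with distinct names. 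Citing Lemma \ref{lem0} for $W_\alpha$, $W_\beta$ being weights is exactly right.
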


\begin{proof}
  Suppose $\beta(g,h)=\gamma(g)\gamma(h)\gamma(gh)^{-1}\alpha(g,h)$.
  We have  $W_\alpha=\sum_{g\in G} A_g^{W_\alpha}$ and set
  $W'=\sum_{g\in G} \gamma(g)A_g^{W_\alpha}$.
  Then $W'\sim W_\alpha$ and $W'$ is a weight with the factor set $\beta$.
  By Lemma \ref{lem2}, $W'\sim W_\beta$ and
  $W_\alpha\sim W_\beta$.
\end{proof}

\begin{lem}\label{lemPhi2}
  The map $\Phi$ is surjective.
\end{lem}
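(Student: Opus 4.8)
The plan is to start from an arbitrary class in $\mathfrak{W}$, represented by a weight $W$ on $\X(G)$ having no zero entry, and exhibit a preimage under $\Phi$. The obvious candidate is the class $[\alpha_W]$ of the factor set attached to $W$ by the relations $A_g^W A_h^W = \alpha_W(g,h) A_{gh}^W$, so the first task is to confirm that $\alpha_W$ really is an element of $Z^2(G,\C^\times)$, i.e.\ a genuine factor set.

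First I would check that $\alpha_W$ takes values in $\C^\times$. Since $W$ has no zero entry, every entry $W_{x,y}$ is nonzero, so by Lemma \ref{lem1} the quantity $\alpha_W(g,h) = W_{x,xg}W_{xg,xgh}/W_{x,xgh}$ is a well-defined nonzero scalar for all $g,h$. Next I would verify the cocycle identity $(\ast)$ using the associativity of the multiplication in $\C^W C$ (condition (W3)): expanding $(A_g^W A_h^W)A_k^W$ and $A_g^W(A_h^W A_k^W)$ via the defining relations yields $\alpha_W(g,h)\alpha_W(gh,k)\,A_{ghk}^W$ and $\alpha_W(g,hk)\alpha_W(h,k)\,A_{ghk}^W$ respectively; because $W$ has no zero entry, $A_{ghk}^W\ne 0$, so the two scalars coincide, which is exactly $(\ast)$. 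Hence $\alpha_W$ is a factor set and $[\alpha_W]\in H^2(G,\C^\times)$.

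With this established, Lemma \ref{lem2} gives $W\sim W_{\alpha_W}$, so $[W]=[W_{\alpha_W}]=\Phi([\alpha_W])$. Since every element of $\mathfrak{W}$ is represented by some weight $W$ with no zero entry, $\Phi$ is surjective.

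I do not expect a serious obstacle: the only real content is the short associativity computation establishing $(\ast)$, and that is a direct expansion once one knows $A_{ghk}^W\ne 0$; everything else is bookkeeping with the already-proved Lemmas \ref{lem1} and \ref{lem2}. The one point to keep in mind is that the ``no zero entry'' hypothesis is used twice—to ensure $\alpha_W$ is $\C^\times$-valued and to cancel $A_{ghk}^W$—which is precisely why this lemma concerns $\mathfrak{W}$ rather than arbitrary weights.
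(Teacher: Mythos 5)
Your proof is correct and follows the paper's route: the paper's proof of this lemma is exactly the one-line appeal to Lemma \ref{lem2}, giving $\Phi([\alpha_W])=[W_{\alpha_W}]=[W]$. Your additional verification that $\alpha_W$ is a genuine factor set (via associativity in $\C^W C$ and the no-zero-entry hypothesis) just makes explicit what the paper asserts when it introduces $\alpha_W$, so the substance is the same.
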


\begin{proof}
  This is clear by Lemma \ref{lem2}.
\end{proof}

\begin{lem}\label{lemPhi3}
  For factor sets $\alpha$ and $\beta$,
  if $W_\alpha\sim W_\beta$, then  $\alpha\sim \beta$.
  (This means that $\Phi$ is injective.)
\end{lem}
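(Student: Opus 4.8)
The plan is to track how the factor set $\alpha_W$ changes under each of the three elementary moves that generate the relation $\sim$ — the scaling $W\mapsto\sum_{g\in G}\gamma(g)A_g^W$, conjugation by a permutation matrix $P_\sigma$ with $\sigma\in\Aut(\X(G))$, and conjugation by a diagonal matrix $\diag(a_x\mid x\in G)$ — and then to conclude by invoking Lemma \ref{lem0}.

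First I would record the effect of each move on the matrices $A_g^W$. If $W'=\sum_{h\in G}\gamma(h)A_h^W$, then $A_g^{W'}=\gamma(g)A_g^W$, since $A_g\circ A_h^W=0$ for $h\ne g$ while $A_g\circ A_g^W=A_g^W$. If $W'=\diag(a_x\mid x\in G)^{-1}W\diag(a_x\mid x\in G)$, a one-line entrywise check gives $A_g^{W'}=\diag(a_x\mid x\in G)^{-1}A_g^W\diag(a_x\mid x\in G)$. If $W'=P_\sigma^{-1}WP_\sigma$ with $\sigma\in\Aut(\X(G))$, then $A_g^{W'}=P_\sigma^{-1}A_g^WP_\sigma$; here the only input is $c_g^\sigma=c_g$, which is exactly what guarantees that conjugating by $P_\sigma$ (which permutes entries according to $\sigma$) sends the support $c_g$ to itself, so the conjugate of $A_g^W$ is again $A_g$ Hadamard-multiplied by $W'$.

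Next I would pass to factor sets via the defining relation $A_g^WA_h^W=\alpha_W(g,h)A_{gh}^W$. Conjugation by $\diag(a_x\mid x\in G)$ or by $P_\sigma$ is applied uniformly to all of $A_g^W$, $A_h^W$, $A_{gh}^W$, hence leaves the factor set unchanged: $\alpha_{W'}=\alpha_W$. For the scaling move,
$$A_g^{W'}A_h^{W'}=\gamma(g)\gamma(h)\alpha_W(g,h)A_{gh}^W=\gamma(g)\gamma(h)\gamma(gh)^{-1}\alpha_W(g,h)\,A_{gh}^{W'},$$
so $\alpha_{W'}(g,h)=\gamma(g)\gamma(h)\gamma(gh)^{-1}\alpha_W(g,h)$, i.e.\ $\alpha_{W'}\sim\alpha_W$. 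Combining the three moves (and using that $\sim$ on factor sets is transitive, i.e.\ $B^2(G,\C^\times)$ is a subgroup of $Z^2(G,\C^\times)$), we conclude that $W\sim W'$ implies $\alpha_W\sim\alpha_{W'}$. Applying this with $W=W_\alpha$ and $W'=W_\beta$, and invoking Lemma \ref{lem0}, which gives $\alpha_{W_\alpha}=\alpha$ and $\alpha_{W_\beta}=\beta$, yields $\alpha\sim\beta$, as desired.

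The only step that needs genuine care is the permutation-conjugation move: one must fix a convention for $P_\sigma$ and verify cleanly that $c_g^\sigma=c_g$ forces $P_\sigma^{-1}A_g^WP_\sigma=A_g^{P_\sigma^{-1}WP_\sigma}$. Everything else is a short entrywise computation; in particular one never needs an explicit description of $\Aut(\X(G))$, only that each of its elements fixes every class $c_g$.
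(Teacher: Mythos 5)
Your proposal is correct and takes essentially the same route as the paper: the paper's proof writes $W_\beta$ in the displayed equivalence form and simply asserts that the factor set it yields is $\gamma(g)\gamma(h)\gamma(gh)^{-1}\alpha(g,h)$, and your move-by-move check (scaling alters $\alpha_W$ by the coboundary of $\gamma$, while diagonal and permutation conjugation leave it unchanged because $c_g^\sigma=c_g$) together with $\alpha_{W_\alpha}=\alpha$, $\alpha_{W_\beta}=\beta$ from Lemma \ref{lem0} is exactly the computation behind that assertion.
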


\begin{proof}
  For $W_\alpha=\sum_{g\in G}A_g^{W_\alpha}$, we can write
  $$W_\beta=\diag(a_g\mid g\in G)^{-1}P_\sigma^{-1} \sum_{g\in G}\gamma(g)
  A_g^{W_\alpha}P_\sigma\diag(a_g\mid g\in G).$$
  Thus the factor set obtained by $W_\beta$ is $\gamma(g)\gamma(h)\gamma(gh)^{-1}\alpha(g,h)$.
  We have $\alpha\sim \beta$.
\end{proof}

By Lemmas \ref{lemPhi1}, \ref{lemPhi2}, \ref{lemPhi3}, $\Phi:H^2(G,\C^\times)\to \mathfrak{W}$
is bijective.

We consider
$$\Psi:\mathfrak{W}_H\to \mathfrak{W},\quad \Psi([W]_H)=[W]$$
and show that $\Psi$ is a bijection.
By definition, it is clear that $\Psi$ is well-defined,
namely, if $W\sim_H W'$, then $W\sim W'$.

\begin{lem}\label{lemPsi1}
  If a factor set $\alpha$ satisfies the conditions (1) and (2) in Proposition \ref{fs_normalized},
  then  $W_\alpha$ is an H-weight.
\end{lem}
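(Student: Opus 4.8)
The plan is to appeal to Lemma \ref{lem0}, which already tells us that $W_\alpha$ is a weight; hence it remains only to check that $W_\alpha$ satisfies condition (W4). First I would observe that for all $x,y\in G$ the entry $(W_\alpha)_{xy}=\alpha(x,x^{-1}y)$ lies in $\C^\times$, so by condition (2) of Proposition \ref{fs_normalized} we get $||(W_\alpha)_{xy}||=1$; in particular the norm condition in (W4) holds and $W_\alpha$ has no zero entry. Taking $y=x$ and using condition (1), $(W_\alpha)_{xx}=\alpha(x,1)=1$, which is the diagonal condition in (W4).

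The only remaining point is that $W_\alpha$ is hermitian, i.e. $(W_\alpha)_{xy}=\overline{(W_\alpha)_{yx}}$ for all $x,y\in G$. Setting $g=x^{-1}y$, so that $y=xg$ and $y^{-1}x=g^{-1}$, this is the assertion $\alpha(x,g)=\overline{\alpha(xg,g^{-1})}$. Since $||\alpha(xg,g^{-1})||=1$, the right-hand side equals $\alpha(xg,g^{-1})^{-1}$, so it suffices to prove $\alpha(x,g)\alpha(xg,g^{-1})=1$. I would obtain this by specializing the cocycle identity ($\ast$) to the triple $(x,g,g^{-1})$: the left side becomes $\alpha(x,g)\alpha(xg,g^{-1})$ and the right side becomes $\alpha(x,gg^{-1})\alpha(g,g^{-1})=\alpha(x,1)\alpha(g,g^{-1})$, which is $1$ by condition (1). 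This verifies that $W_\alpha$ is hermitian, completing the check of (W4), so $W_\alpha$ is an H-weight.

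I do not expect any real obstacle here; the proof is a short verification. The one place to be careful is choosing the right instance of ($\ast$) — the triple $(x,g,g^{-1})$, so that the normalization condition (1) collapses the right-hand side to $1$ — rather than an instance that only reduces the identity to a less useful relation.
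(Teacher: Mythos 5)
Your proposal is correct and follows essentially the same route as the paper: the diagonal and unit-norm conditions are checked exactly as in the paper's proof, and your entry-wise verification of hermiticity via the cocycle identity at $(x,g,g^{-1})$ together with $\alpha(x,1)=\alpha(g,g^{-1})=1$ is just the entry-level form of the paper's matrix argument $A_{g^{-1}}^{W_\alpha}=(A_g^{W_\alpha})^{-1}=(A_g^{W_\alpha})^*$. Invoking Lemma \ref{lem0} for the weight property is also the intended reduction, so nothing is missing.
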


\begin{proof}
  Suppose that $\alpha$ satisfies (1) and (2).
  Recall that $(W_\alpha)_{xy}=\alpha(x,x^{-1}y)$.
  Since $\alpha(x,1)=1$, $(W_\alpha)_{xx}=1$ for $x\in G$.
  For all $x,y\in G$, 
  $||(W_\alpha)_{xy}||=||\alpha(x,x^{-1}y)||=1$ hold.
  By $\alpha(x,x^{-1})=1$, $A_{x^{-1}}^{W_\alpha}=(A_{x}^{W_\alpha})^{-1}=(A_{x}^{W_\alpha})^*$.
  Thus $W=\sum_{x\in G} A_{x}^{W_\alpha}$ is hermitian.
  Now $W$ is an H-weight.
\end{proof}

\begin{lem}\label{lemPsi2}
  $\Psi$ is surjective.
\end{lem}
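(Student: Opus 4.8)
The plan is to take an arbitrary class $[W]\in\mathfrak{W}$, represented by a weight $W$ on $\X(G)$ with no zero entry, and to exhibit an H-weight lying in the same $\sim$-equivalence class; producing such an H-weight $W'$ shows at once that $\Psi([W']_H)=[W']=[W]$.

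First I would pass to the factor set $\alpha_W$ associated with $W$. By Proposition \ref{fs_normalized}, there is a factor set $\beta$ equivalent to $\alpha_W$ satisfying conditions (1) and (2) of that proposition, i.e.\ $\beta(g,1)=\beta(1,g)=\beta(g,g^{-1})=\beta(g^{-1},g)=1$ and $||\beta(g,h)||=1$ for all $g,h\in G$. Then Lemma \ref{lemPsi1} guarantees that $W_\beta$ is an H-weight, and since each entry of $W_\beta$ equals some value $\beta(x,x^{-1}y)\in\C^\times$, it has no zero entry; hence $[W_\beta]_H\in\mathfrak{W}_H$. It remains to check $W\sim W_\beta$: by Lemma \ref{lem2} we have $W\sim W_{\alpha_W}$, and since $\alpha_W\sim\beta$, Lemma \ref{lemPhi1} gives $W_{\alpha_W}\sim W_\beta$. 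Transitivity of $\sim$ then yields $W\sim W_\beta$, so $\Psi([W_\beta]_H)=[W_\beta]=[W]$ and $\Psi$ is surjective.

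No step here is genuinely difficult: the argument is essentially bookkeeping that chains Proposition \ref{fs_normalized} together with Lemmas \ref{lem2}, \ref{lemPhi1} and \ref{lemPsi1}. The only point deserving a moment's care is confirming that the constructed $W_\beta$ truly belongs to $\mathfrak{W}_H$ — that it is simultaneously an H-weight and free of zero entries — and noting that the target $\mathfrak{W}$ is defined via the coarse equivalence $\sim$, so that the weaker statement $W\sim W_\beta$ (not $W\sim_H W_\beta$) is exactly what is required.
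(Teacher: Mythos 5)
Your argument is correct and follows essentially the same route as the paper's proof: pass to $\alpha_W$, normalize it via Proposition \ref{fs_normalized} to obtain $\beta$, and chain Lemmas \ref{lem2}, \ref{lemPhi1} and \ref{lemPsi1} to get $W\sim W_\beta$ with $W_\beta$ an H-weight. Your extra remark that $W_\beta$ has no zero entries is a reasonable bit of added care, but otherwise the two proofs coincide.
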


\begin{proof}
  Let $W$ be a weight.
  By Lemma \ref{lem2}, $W\sim W_{\alpha_W}$.
  By Proposition \ref{fs_normalized}, there exists a factor set $\beta\sim\alpha_W$ which satisfies
  the conditions (1) and (2) in Proposition \ref{fs_normalized}.
  By Lemmas \ref{lemPhi1},  \ref{lemPsi1},
  $W_\beta$ is an H-weight and $W_{\alpha_W}\sim W_\beta$.
  Now $\Psi([W_\beta]_H)=[W_\beta]=[W]$ and $\Psi$ is surjective.
\end{proof}

\begin{lem}\label{lemPsi3}
  For H-weights $W$ and $W'$, if $W\sim W'$, then $W\sim_H W'$.
  (This means that $\Psi$ is injective.)
\end{lem}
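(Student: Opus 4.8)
The plan is to start from an arbitrary triple $(a_x,\gamma,\sigma)$ witnessing $W\sim W'$ and to check that, because $W$ and $W'$ are H-weights with no zero entry, this very triple (after one innocuous rescaling) already satisfies the stronger constraints in the definition of $\sim_H$; so no new data has to be produced. Concretely, recall that for H-weights with no zero entry we have $W_{xy},W'_{xy}\in\C^\times$ with $\|W_{xy}\|=\|W'_{xy}\|=1$ and $W_{xx}=W'_{xx}=1$ for all $x,y\in G$. Writing $\gamma(g)=\gamma(c_g)$ and unwinding the definition of $\sim$, the first thing I would record is the bookkeeping identity
$$W'_{xy}=a_x^{-1}\,a_y\,\gamma(x^{-1}y)\,W_{x^\sigma,y^\sigma}\qquad(x,y\in G),$$
obtained by a direct entrywise computation in the spirit of the proof of Lemma \ref{lem1}; here one uses that $c_g^\sigma=c_g$ forces $(x^\sigma)^{-1}y^\sigma=x^{-1}y$ (so $\sigma$ is in fact left translation, though that will not be needed).

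Next I would extract $\|\gamma(c)\|=1$. Taking moduli in the displayed identity and using $\|W_{xy}\|=\|W'_{xy}\|=1$ gives $\|a_x\|=\|a_y\|\,\|\gamma(x^{-1}y)\|$ for all $x,y$; comparing this relation for the pairs $(x,y)$, $(y,z)$, $(x,z)$ yields $\|\gamma(ab)\|=\|\gamma(a)\|\,\|\gamma(b)\|$, so $g\mapsto\|\gamma(g)\|$ is a homomorphism $G\to\mathbb{R}_{>0}$, hence trivial since $G$ is finite. Thus $\|\gamma(c)\|=1$ for all $c\in C$, and then $\|a_x\|$ is independent of $x$; replacing each $a_x$ by $a_x/\|a_1\|$ leaves $W'$ unchanged, so I may assume $\|a_x\|=1$ for all $x$.

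Finally I would obtain the symmetry condition. Since $c_g^*=c_{g^{-1}}$, what remains is $\gamma(g^{-1})=\gamma(g)^{-1}$. Applying $\overline{W'_{xy}}=W'_{yx}$ (hermiticity of $W'$) to the displayed formula and using $\|a_x\|=\|\gamma(g)\|=1$, hermiticity of $W$, and $W_{x^\sigma,y^\sigma}\ne 0$, all nonzero factors cancel and one is left with $\gamma(x^{-1}y)^{-1}=\gamma(y^{-1}x)$, i.e.\ $\gamma(g^{-1})=\gamma(g)^{-1}$. At that point $(a_x,\gamma,\sigma)$ meets every requirement in the definition of $\sim_H$, so $W\sim_H W'$. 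There is no real obstacle here: the only things to be careful about are the modulus bookkeeping in the second step and renormalizing the $a_x$ so that $\|a_x\|=1$ exactly (not merely constant); the conceptual content is simply that the H-weight axioms are rigid enough to make any equivalence data automatically ``unitary''.
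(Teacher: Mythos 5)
Your proposal is correct, and it reaches the conclusion by the same overall strategy as the paper (show that any data $(a_x,\gamma,\sigma)$ witnessing $W\sim W'$ is automatically ``unitary'' because all nonzero entries of H-weights have modulus $1$), but the technical route differs in two places. For $\|\gamma(g)\|=1$ the paper stays at the level of factor sets: it uses $\alpha_{W'}(g,h)=\gamma(g)\gamma(h)\gamma(gh)^{-1}\alpha_W(g,h)$ together with $\|\alpha_W\|=\|\alpha_{W'}\|=1$ (Lemma \ref{lem1}) and a maximum/minimum argument on $\|\gamma(g_0)\|$ via $\|\gamma(g_0)\|^2=\|\gamma(g_0^2)\|$; you instead work entrywise, show that $g\mapsto\|\gamma(g)\|$ is multiplicative, and invoke that a finite group has no nontrivial homomorphism to $\mathbb{R}_{>0}$ --- an equally valid and arguably cleaner extraction of the same fact. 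Second, you explicitly verify the remaining requirement in the definition of $\sim_H$, namely $\gamma(c^*)=\gamma(c)^{-1}$, by comparing $\overline{W'_{xy}}=W'_{yx}$ with the entrywise formula and using hermiticity of $W$; the paper's proof stops after the modulus statements and leaves this check implicit, so your argument is in this respect more complete. You are also more careful than the paper about the step from ``$\|a_g^{-1}a_h\|=1$ for all $g,h$'' (i.e.\ constant modulus) to ``$\|a_g\|=1$'', handling it by the harmless rescaling $a_x\mapsto a_x/\|a_1\|$, which does not change the conjugation. One small caveat: like the paper's own proof, your argument tacitly assumes the no-zero-entry situation of Theorem \ref{thmA} (so that $W_{x^\sigma,y^\sigma}\ne 0$ can be cancelled and every $g$ occurs as $x^{-1}y$ in the support); this matches the context in which the lemma is used, so it is not a gap.
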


\begin{proof}
  Suppose that $W$ and $W'$ are H-weights and set
  $$W'=\diag(a_g\mid g\in G)^{-1}P_\sigma^{-1} \sum_{g\in G}\gamma(g)
  A_g^{W}P_\sigma\diag(a_g\mid g\in G).$$
  We have
  $\alpha_{W'}(g,h)=\gamma(g)\gamma(h)\gamma(gh)^{-1}\alpha_W(g,h)$ and
  $||\alpha_W(g,h)||=||\alpha_{W'}(g,h)||=1$ for all $g,h\in G$.
  Choose $g_0\in G$ such that $||\gamma(g_0)||$ is maximal.
  By $\alpha_{W'}(g_0,g_0)=\gamma(g_0)^2\gamma(g_0^2)^{-1}\alpha(g_0,g_0)$,
  we have $||\gamma(g_0)||^2=||\gamma(g_0^2)||\leq ||\gamma(g_0)||$ and this shows
  $||\gamma(g_0)||\leq 1$.
  Conversely, choose $g_1\in G$ such that $||\gamma(g_1)||$ is minimal.
  Then we have $||\gamma(g_1)||\geq 1$.
  This shows that $||\gamma(g)||=1$ for all $g\in G$.

  The absolute values of entries of $P_\sigma^{-1} \sum_{g\in G}\gamma(g)A_g^{W}P_\sigma$ are $1$.
  For all $g,h\in G$, we can see that $||a_g^{-1}a_h||=1$ and
  this shows $||a_g||=1$ for all $g\in G$.
  Now $W\sim_H W'$.
\end{proof}

\begin{proof}[Proof of Theorem \ref{thmA}]
  By Lemmas \ref{lemPhi1}, \ref{lemPhi2}, \ref{lemPhi3}, $\Phi:H^2(G,\C^\times)\to \mathfrak{W}$
  is bijective.
  By Lemmas \ref{lemPsi2},  \ref{lemPsi3}, $\Psi:  \mathfrak{W}_H \to \mathfrak{W}$
  is bojective.
\end{proof}

We consider weights having zero entry.

\begin{proof}[Proof of Theorem \ref{thmB}]
  Let $W$ be a weight on $\X(G)$ which can have zero entry.
  The support of $W$ determines a subgroup $H$ of $G$.
  We can write $W$ as a block diagonal matrix, and all blocks are weights of $H$
  of the same factor set of $H$.
  We remark that the support is invariant under equivalence of weights.
  Now the result holds.
\end{proof}

\section{A construction}\label{sec4}
In this section, we will generalize the result by Higman \cite{HigmanSapporo}
to construct weights on \HCC.
Higman used monomial representations of finite groups.
We will define monomial representations of \HCC,
and construct weights on \HCC.
As we mentioned, {\HCC}s are association schemes in the sense in \cite{Zi}.
We will use terminologies in \cite{Zi}.

We summarize the theory of {\HCC}s (association schemes) and their representations
with reference to \cite{Higman1975,Zi}.

Let $\X=(X,C)$ be a \HCC.
The adjacency algebra $\C C$ is known to be semisimple.
Thus we can write $\C C\cong\bigoplus_{i=1}^r M_{n_i}(\C)$,
and character theory works well.
By $\Irr(C)$, we denote the set of all irreducible characters of $\C C$.
Naturally $\C C$ acts on $\C X$, and we call $\C X$ the \emph{standard module}.
The \emph{standard character} is also defined.
The multiplicity of $\chi\in \Irr(C)$ in the standard character is called the \emph{multiplicity}
of $\chi$ and denoted by $m_\chi$.
There is a natural $\C C$-monomorphism from the regular module $\C C$
to the standard module $\C X$,
and thus $\chi(1)\leq m_\chi$ holds for $\chi\in \Irr(C)$.
Let $e_\chi$ be the central primitive idempotent corresponding to $\chi\in \Irr(C)$.
Then the rank of $e_\chi$ as an element in $M_X(\C)$ is $m_\chi \chi(1)$.

A subset $D$ of $C$ is called a \emph{closed subset} of $C$ (or $\X$) if
$\C D=\bigoplus_{d\in D}\C A_d$ is a subalgebra of $\C C$.
Suppose that $D$ is a closed subset.
For $x\in X$, set $xD=\{y\in X\mid \text{$(x,y)\in d$ for some $d\in D$}\}$.
We have a partition $X=x_1D\cup\dots\cup x_mD$.
Then $(x_iD, D_{x_iD})$ is also a {\HCC}, called a \emph{sub \HCC},
where $D_{x_iD}=\{d\cap(x_iD\times x_iD)\mid d\in D\}$.
We remark that sub {\HCC}s are not necessarily isomorphic,
but their adjacency algebras are isomorphic to $\C D$. 
Thus we can identify $\Irr(D_{x_iD})$ and write $\Irr(D)$.
Set $X/D=\{x_1D,\dots,x_mD\}$.
Define $c^D=\{(x_iD,x_jD)\mid c\cap(x_iD\times x_jD)\ne \emptyset\}$
for $c\in C$ and $C/\!/D=\{c^D\mid c\in C\}$.
Then $(X/D, C/\!/D)$ is a {\HCC}, called the \emph{factor \HCC}.
We remark that $C/\!/D$ defines a partition of $C$.

Now we define monomial representations (character) of {\HCC}s.
Let $D$ be a closed subset of $\X=(X,C)$,
and let $\varphi\in\Irr(D)$ be of multiplicity one.
The induced character $\varphi^{\uparrow C}$ is called a \emph{monomial character} of $\X$.
Let $e_i\in M_{x_iD}(\C)$ be the central primitive idempotent corresponding to $\varphi\in \Irr(D)$.
Since $m_\varphi=1$, the rank of $e_i$ is $1$.
By \cite[Theorem 2.8]{Hirasaka-Muzychuk2002},
$\varphi$ is essentially a character of a cyclic group,
namely, there exists a closed subset $K$ of $C$ such that the factors of sub {\HCC}s are
isomorphic cyclic groups.
Thus we may assume that all $e_i$ are same matrices $e$.
Now we can set
$$e_\varphi=
\begin{pmatrix}
  e && \\
    &\ddots&\\
  &&e
\end{pmatrix}\in \C D\subset \C C\subset M_X(\C),$$
the primitive idempotent in $\C D$ corresponding to $\varphi\in \Irr(D)$.
The $\C D$-module $e_\varphi \C D$ affords the character $\varphi$, and so the
induced module, the module of monomial representation, is
$$e_\varphi \C D_{\C D}\otimes \C C\cong e_\varphi \C C.$$

We consider the endomorphism algebra
$\mathrm{End}_{\C C}(e_\varphi \C C)\cong e_\varphi \C C e_\varphi$.
We set $|x_iD|=\ell$.
Then the rank of $e M_\ell(\C) e =1$ and so $e M_\ell(\C) e=\C e$.
Thus
$$e_\varphi \C C e_\varphi\subset e_\varphi M_X(\C)e_\varphi\cong 
\begin{pmatrix}
  \C e & \dots &\C e\\
       & \dots & \\
  \C e & \dots &\C e\\
\end{pmatrix}\cong M_m(\C)\cong M_{X/D}(\C).$$
We can define an algebra homomorphism $\Gamma:e_\varphi \C C e_\varphi\to M_{X/D}(\C)$.

We will choose representatives of $c^D$.

\begin{lem}\label{lem4.1}
  Suppose $e_\varphi A_ce_\varphi\ne 0$.
  Then, for $c'\in C$ with $c^D=c'^D$,  there exists $\mu\in \C$ such that
  $e_\varphi A_{c'}e_\varphi=\mu e_\varphi A_c e_\varphi$.
\end{lem}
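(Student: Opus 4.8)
The plan is to show that sandwiching $A_c$ between two copies of $e_\varphi$ collapses the whole $C/\!/D$-fibre of $c$ onto the single line $\C\,e_\varphi A_c e_\varphi$, and to deduce this collapse from the position of $\varphi$ inside $\C D$. The first ingredient is a collapsing identity. Since $\varphi$ has multiplicity one and the rank of each block $e_i$ is $1$, we have $\varphi(1)=1$, so the left $\C D$-module $\C D\,e_\varphi$ and the right $\C D$-module $e_\varphi\,\C D$ are one-dimensional and equal $\C\,e_\varphi$. Writing $\varphi(d)$ for the scalar by which $A_d$ acts on this line, we get $e_\varphi A_d=A_d e_\varphi=\varphi(d)\,e_\varphi$, and therefore
$$e_\varphi A_d A_c A_{d'} e_\varphi=\varphi(d)\varphi(d')\,e_\varphi A_c e_\varphi\qquad(d,d'\in D).$$
Hence $e_\varphi\,(\C D\, A_c\, \C D)\,e_\varphi=\C\, e_\varphi A_c e_\varphi$, and it suffices to prove the purely configurational statement that $A_{c'}\in\C D\, A_c\, \C D$ whenever $c'^D=c^D$: expanding such an $A_{c'}$ as a $\C$-linear combination of products $A_d A_c A_{d'}$ and applying $e_\varphi(\,\cdot\,)e_\varphi$ then produces a scalar $\mu$ with $e_\varphi A_{c'}e_\varphi=\mu\,e_\varphi A_c e_\varphi$, as wanted.

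Next I would reduce to the case that $D$ is thin. By \cite[Theorem 2.8]{Hirasaka-Muzychuk2002} --- already used above to take all the $e_i$ equal to a common $e$ --- the character $\varphi$ is essentially a linear character of a cyclic group: there is a closed subset $K\subseteq D$ with $D/\!/K$ thin and cyclic through which $\varphi$ factors, so that $e_\varphi=e_K e_\varphi=e_\varphi e_K$ for the principal idempotent $e_K$ of $\C K$. Passing to $e_K\,\C C\,e_K\cong\C(C/\!/K)$ replaces $\varphi$ by a linear character of the thin closed subset $D/\!/K$ of $C/\!/K$ and replaces the condition $c^D=c'^D$ by $(c^K)^{D/\!/K}=(c'^K)^{D/\!/K}$; so we may assume $D$ is thin. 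Then every $d\in D$ is a thin relation, so $A_d$ is a permutation matrix, left or right multiplication by $A_d$ permutes the basis $\{A_b\mid b\in C\}$, and $A_d A_c A_{d'}=A_{d\cdot c\cdot d'}$ for a single relation $d\cdot c\cdot d'\in C$; hence $\C D\, A_c\, \C D=\operatorname{span}\{A_b\mid b\in D\cdot c\cdot D\}$, where $D\cdot c\cdot D=\{d\cdot c\cdot d'\mid d,d'\in D\}$.

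It then remains to identify $D\cdot c\cdot D$ with the full fibre $\{b\in C\mid b^D=c^D\}$. The inclusion $\subseteq$ holds because $d\cdot c\cdot d'$ and $c$ join the same pairs of $D$-cosets, $d^D$ and $d'^D$ being identity relations of $C/\!/D$. For $\supseteq$, take $c'$ with $c'^D=c^D$, fix a pair $(xD,yD)$ lying in this common relation, choose $(u,v)\in c$ and $(u',v')\in c'$ with $u,u'\in xD$ and $v,v'\in yD$, and choose $d,d'\in D$ with $(u',u)\in d$ and $(v,v')\in d'$. Then $(u',v')\in d\cdot c\cdot d'$, so $d\cdot c\cdot d'$ and $c'$ meet and hence coincide, giving $c'\in D\cdot c\cdot D$. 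Thus $A_{c'}\in\operatorname{span}\{A_b\mid b^D=c^D\}=\C D\, A_c\, \C D$, which by the first paragraph finishes the proof.

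I expect the only real difficulty to be the reduction through \cite{Hirasaka-Muzychuk2002}: for a general, non-thin $D$ the inclusion $A_{c'}\in\C D\, A_c\, \C D$ is not automatic, since $A_d A_c$ is then merely a non-negative combination of several basis elements, and it is precisely the cyclic structure of $\varphi$ --- equivalently, that $e_\varphi$ is attached to a thin quotient --- that forces the two-sided $e_\varphi$-multiplication to collapse the fibre. The remaining bookkeeping (composition of thin relations and the ``common coset pair'' translation in the third paragraph) is routine.
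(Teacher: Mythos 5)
Your first paragraph (the identity $e_\varphi A_d e_\varphi=\varphi(d)e_\varphi$ for $d\in D$, valid because $\varphi(1)=1$) and your thin-case combinatorics are fine, but the reduction in your second paragraph has a genuine gap, and as written it is circular. To ``pass to $e_K\C Ce_K\cong\C(C/\!/K)$'' and translate the hypothesis $c^D=c'^D$ into a statement about the quotient scheme, you must know not just that some algebra isomorphism exists, but that it carries $e_KA_ce_K$ to a scalar multiple of the adjacency matrix of $c^K$ --- equivalently, that $e_KA_{c'}e_K$ is proportional to $e_KA_ce_K$ whenever $c'^K=c^K$. That statement is exactly Lemma \ref{lem4.1} for the pair $(D,\varphi)=(K,\text{principal character})$, and $K$ is in general not thin, so your thin-case argument does not cover it; feeding this instance back into your own reduction returns you to the same unproved claim. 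The isomorphism $e_K\C Ce_K\cong\C(C/\!/K)$ in this concrete form is indeed in the literature, but it is neither cited in this paper nor proved by you, and its standard proof is essentially the block computation you are trying to bypass. Two smaller points in the same paragraph are also left unjustified: that $K$ can be chosen inside $D$ with $\varphi$ trivial on $K$ (so that $e_\varphi e_K=e_\varphi$), and the identification of $(c^K)^{D/\!/K}$ with $c^D$.

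For comparison, the paper never needs the claim $A_{c'}\in\C D\,A_c\,\C D$ (which, as you note, is doubtful for non-thin $D$), nor any thinness reduction: it works with an arbitrary closed subset $D$ directly. Writing $e_\varphi A_ce_\varphi$ and $e_\varphi A_{c'}e_\varphi$ in block form, every block is a scalar multiple of the rank-one idempotent $e$; choosing $(s,t)$ with $a_{st}\ne0$, the element $L=a_{st}^{-1}b_{st}\,e_\varphi A_ce_\varphi-e_\varphi A_{c'}e_\varphi$ lies in $\C C$ and is supported only on block pairs belonging to $c^D$ (here one uses that $C/\!/D$ partitions $C$), hence is a linear combination of the $A_{c_1}$ with $c_1^D=c^D$; since every such $A_{c_1}$ has nonzero entries in the $(s,t)$-block, where $L$ vanishes, all coefficients are zero and $L=0$. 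Your rank-one observation is close in spirit to this, but the detour through Hirasaka--Muzychuk, quotient schemes and thinness is where your proof, as written, breaks down.
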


\begin{proof}
  Write
  $$A_c
  =\begin{pmatrix}
    (A_c)_{11}&\dots&(A_c)_{1m}\\
              &\dots&\\
    (A_c)_{m1}&\dots&(A_c)_{mm}
  \end{pmatrix}, \quad 
  A_{c'}
  =\begin{pmatrix}
    (A_{c'})_{11}&\dots&(A_{c'})_{1m}\\
              &\dots&\\
    (A_{c'})_{m1}&\dots&(A_{c'})_{mm}
  \end{pmatrix}.$$
  Then we can write
  $$e_\varphi A_c e_\varphi
  =\begin{pmatrix}
    e(A_c)_{11}e&\dots&e(A_c)_{1m}e\\
              &\dots&\\
    e(A_c)_{m1}e&\dots&e(A_c)_{mm}e
  \end{pmatrix}
  =\begin{pmatrix}
    a_{11}e&\dots&a_{1m}e\\
              &\dots&\\
    a_{m1}e&\dots&a_{mm}e
  \end{pmatrix},$$
  $$e_\varphi A_{c'} e_\varphi
  =\begin{pmatrix}
    e(A_{c'})_{11}e&\dots&e(A_{c'})_{1m}e\\
              &\dots&\\
    e(A_{c'})_{m1}e&\dots&e(A_{c'})_{mm}e
  \end{pmatrix}
  =\begin{pmatrix}
    b_{11}e&\dots&b_{1m}e\\
              &\dots&\\
    b_{m1}e&\dots&b_{mm}e
  \end{pmatrix}$$
  for some $a_{ij}, b_{ij}\in \C$.
  Suppose $a_{st}\ne 0$. 
  Set $L=a_{st}^{-1}b_{st}e_\varphi A_{c} e_\varphi - e_\varphi A_{c'} e_\varphi$.
  Then the $(s,t)$-part of $L$ is $0$.
  We remark that
  \begin{eqnarray*}
    (x_i D,x_j d)\in c^D &\Longleftrightarrow& (A_c)_{ij}\ne 0,\\
     (x_i D,x_j d)\not\in c^D &\Longrightarrow& e(A_c)_{ij}e= 0,\ a_{ij}=0.
  \end{eqnarray*}
  We put $U=\{c_1\in C\mid c_1^D=c^D\}$.
  Since $L\in \C C$,  we can write
  $L=\sum_{c_1\in U}\mu(c_1)A_{c_1}$ for some $\mu(c_1)\in \C$.
  By the definition of $c^D$, every $A_{c_1}$ ($c_1\in U$) has non-zero entries in the $(s,t)$-part,
  and thus $\mu(c_1)=0$ for all $c_1\in U$.
  Now $e_\varphi A_{c'} e_\varphi=a_{st}^{-1}b_{st}e_\varphi A_{c} e_\varphi$.
\end{proof}

Choose $c_\lambda\in C$ ($\lambda\in\Lambda$) such that
$C/\!/D=\{c_\lambda^D\mid \lambda\in\Lambda\}$,
$c_\lambda^D\ne c_{\lambda'}^D$ if $\lambda\ne \lambda'$,
and $e_\varphi A_{c_\lambda} e_\varphi\ne 0$ if such $c_\lambda$ exists.
Then $\{e_\varphi A_{c_\lambda}e_\varphi
\mid \lambda\in\Lambda, \ e_\varphi A_{c_\lambda} e_\varphi\ne 0\}$
is a basis of $e_\varphi \C C e_\varphi$.
We put
$$W=\sum_{\lambda\in\Lambda}\Gamma(e_\varphi A_{c_\lambda} e_\varphi)$$
and show that $W$ is a weight on $(X/D, C/\!/D)$.
We remark that $c_\lambda$ is not unique to $c_\lambda^D$, but
$\Gamma(e_\varphi A_{c_\lambda} e_\varphi)$ is unique up to scalar multiple by Lemma \ref{lem4.1}
and thus $W$ is unique up to equivalence of weights.

\begin{thm}
  Let $\X=(X,C)$ be a {\HCC}, and $D$ a closed subset of $C$.
  Let $\varphi$ be an irreducible character of $D$ of multiplicity one.
  Then $W$ defined above is a weight on the factor {\HCC} $(X/D, C/\!/D)$.
\end{thm}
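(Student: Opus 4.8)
The plan is to verify conditions (W1), (W2), (W3) for $W$ directly, exploiting the algebra homomorphism $\Gamma:e_\varphi\C C e_\varphi\to M_{X/D}(\C)$ and the fact that $\{e_\varphi A_{c_\lambda}e_\varphi\}$ is a basis of $e_\varphi\C C e_\varphi$. First I would note that for each $\bar c\in C/\!/D$, writing $\bar c=c_\lambda^D$, the matrix $\Gamma(e_\varphi A_{c_\lambda}e_\varphi)\in M_{X/D}(\C)$ is supported, as a $01$-pattern, exactly on $\bar c\subset (X/D)\times(X/D)$: indeed from the block computation in Lemma~\ref{lem4.1}, the $(i,j)$-block of $e_\varphi A_{c_\lambda}e_\varphi$ is $a_{ij}e$ with $a_{ij}=0$ whenever $(x_iD,x_jD)\notin c_\lambda^D$, and it is nonzero for at least one such pair since $e_\varphi A_{c_\lambda}e_\varphi\ne0$; one must check $a_{ij}\ne0$ for \emph{every} $(x_iD,x_jD)\in\bar c$, which follows by the same argument as in the last lines of the proof of Lemma~\ref{lem4.1} applied to the block $L$. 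Hence $A_{\bar c}^W=A_{\bar c}\circ W=\Gamma(e_\varphi A_{c_\lambda}e_\varphi)$ for each $\bar c\in C/\!/D$ (and $A_{\bar c}^W=0$ if no admissible $c_\lambda$ exists), so $\spt(W)=\bigcup_{\lambda}c_\lambda^D$, a union of basic relations of the factor {\HCC}.

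Next, for (W1) I would show this support is $*$-closed: since $e_\varphi$ is a hermitian idempotent (it is a real symmetric $01$-combination up to the cyclic-group character structure — more carefully, $e_\varphi^*=e_\varphi$ because $e$ is the projection onto a one-dimensional invariant subspace and the adjacency algebra is $*$-closed), we get $(e_\varphi A_c e_\varphi)^*=e_\varphi A_{c^*}e_\varphi$, so $e_\varphi A_ce_\varphi\ne0\iff e_\varphi A_{c^*}e_\varphi\ne0$, and $c^D$ ranges over a $*$-closed subset of $C/\!/D$ as $c$ does. For (W2): taking $c=d\in D$ with $d$ on the diagonal block structure, $e_\varphi A_{1_X}e_\varphi=e_\varphi\ne0$ and $\Gamma(e_\varphi)=I_{X/D}$, so $W_{x_iD,x_iD}\ne0$ for all $i$; more precisely the identity relation of $C/\!/D$ is $1_X^D$, and $A_{1_X^D}^W=\Gamma(e_\varphi)=I$, giving $W_{xx}=1\ne0$.

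For (W3), the key point is that $\C^W(C/\!/D)=\bigoplus_{\bar c\in C/\!/D}\C A_{\bar c}^W=\Gamma(e_\varphi\C C e_\varphi)$ as a subspace of $M_{X/D}(\C)$: the left side is spanned by $\{\Gamma(e_\varphi A_{c_\lambda}e_\varphi)\}$, which is exactly the $\Gamma$-image of a basis of $e_\varphi\C C e_\varphi$, and $\Gamma$ is injective (being the isomorphism $e_\varphi\C C e_\varphi\cong\End_{\C C}(e_\varphi\C C)$ landing in $M_{X/D}(\C)$). Since $e_\varphi\C C e_\varphi$ is an algebra and $\Gamma$ is an algebra homomorphism, its image is a subalgebra of $M_{X/D}(\C)$, so $\C^W(C/\!/D)$ is closed under multiplication — this is (W3). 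The main obstacle I anticipate is the bookkeeping in the first step: making rigorous that the $01$-support of $\Gamma(e_\varphi A_{c_\lambda}e_\varphi)$ is precisely the relation $c_\lambda^D$ (neither smaller nor larger), and in particular that distinct $c_\lambda$ with $e_\varphi A_{c_\lambda}e_\varphi\ne0$ give matrices with disjoint supports so that $W=\sum_\lambda\Gamma(e_\varphi A_{c_\lambda}e_\varphi)$ has the clean Hadamard decomposition $A_{\bar c}^W$; this uses Lemma~\ref{lem4.1} together with the observation that $C/\!/D$ partitions $C$, so the block patterns of the $A_{c_\lambda}$ for different $\lambda$ occupy disjoint positions.
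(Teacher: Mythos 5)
Your proof is correct and follows essentially the same route as the paper's: the paper likewise gets (W1) from the hermiticity of $e$ (hence $e_\varphi$), which gives $e_\varphi A_{c^*}e_\varphi=(e_\varphi A_c e_\varphi)^*\ne 0$, treats (W2) as clear, and obtains (W3) because $\C^{W}(C/\!/D)$ is the image of the algebra homomorphism $\Gamma$. Your extra bookkeeping showing that the support of $\Gamma(e_\varphi A_{c_\lambda}e_\varphi)$ is exactly $c_\lambda^D$ is precisely what the paper compresses into ``it is easy to see,'' and your way of getting it from the argument at the end of Lemma~\ref{lem4.1} (writing $e_\varphi A_c e_\varphi=\sum_{c_1^D=c^D}\mu(c_1)A_{c_1}$, since $C/\!/D$ partitions $C$) does work; the only loose point is your stated reason for $e$ being hermitian (a rank-one idempotent in a $*$-closed algebra need not be hermitian), whereas the paper uses that $e$ is the central primitive idempotent attached to a linear character of a cyclic group --- equivalently, one can note that central primitive idempotents of a $*$-closed semisimple subalgebra of $M_n(\C)$ are automatically hermitian.
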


\begin{proof}
  It is easy to see that
  $\spt(W)=\bigcup_{c}c^D$, where $c$ runs over $\{c\in C\mid e_\varphi A_c e_\varphi\ne 0\}$.
  Suppose $c^D\subset \spt(W)$. We may assume $e_\varphi A_c e_\varphi\ne 0$.
  We remark that $e$ is hermitian, because $e$ is essentially a central primitive idempotent
  corresponding to a linear character of a finite group.
  We have $e_\varphi A_{c^*}e_\varphi=(e_\varphi A_c e_\varphi)^*\ne 0$, and so $c^*\subset \spt(W)$.
  The condition (W1) holds.
  
  (W2) is clear.
  (W3) is also clear since $\C^{W}(C/\!/D)$
  is the image of the algebra homomorphism $\Gamma$.
\end{proof}

\bibliographystyle{amsplain}
\providecommand{\bysame}{\leavevmode\hbox to3em{\hrulefill}\thinspace}
\providecommand{\MR}{\relax\ifhmode\unskip\space\fi MR }
\providecommand{\MRhref}[2]{%
  \href{http://www.ams.org/mathscinet-getitem?mr=#1}{#2}
}
\providecommand{\href}[2]{#2}

\end{document}